\documentclass[%
12pt%
,a4paper% 
%,draft%,
]{article}

\usepackage{amssymb}
\usepackage{amsmath}
\usepackage[normalem]{ulem}
\usepackage{amsthm}
\usepackage[utf8]{inputenc}
\usepackage[english]{babel}
\usepackage{graphicx}

% AJ pt
\usepackage[a4paper,margin=2.5cm]{geometry}
%\usepackage{showkeys}
% end AJ pt
 
% HKK pt
\usepackage{enumerate}
\usepackage{url}
\usepackage{hyperref}
\usepackage[font=
%normalsize%
%small%
footnotesize%
%scriptsize%
]{caption}

% end HKK pt

% theorem environments
\theoremstyle{plain}
\newtheorem{theorem}{Theorem}[section]
\newtheorem{proposition}[theorem]{Proposition}
\newtheorem{lemma}[theorem]{Lemma}
\newtheorem{corollary}[theorem]{Corollary}

\theoremstyle{definition}

\newtheorem{assumption}[theorem]{Assumption}
\newtheorem{remark}[theorem]{Remark}
\newtheorem{example}[theorem]{Example}

% numbering
\numberwithin{equation}{section}

\long\def\MSC#1\EndMSC{\def\arg{#1}\ifx\arg\empty\relax\else
     {\narrower\noindent%
{2010 Mathematics Subject Classification}: #1\\} \fi}
\long\def\KEY#1\EndKEY{\def\arg{#1}\ifx\arg\empty\relax\else
	{\narrower\noindent% 
Key words: #1\\}\fi}

% Essential commands, cannot be removed
\newcommand{\abs}[1]{\lvert{#1}\rvert}  % absolute value
\newcommand{\norm}[1]{\lVert{#1}\rVert} % norm
\newcommand{\ip}[2]{\langle{#1},{#2}\rangle} % inner product

 % imaginary part
\DeclareMathOperator{\re}{Re} % real part
\DeclareMathOperator{\ran}{Ran} % range
\DeclareMathOperator{\tr}{tr}

% Hans' commands %%%%%%%%%%%%%%%%%%%%%%% 

\newcommand{\IC}{\mathbb{C}}   % complex numbers
\newcommand{\IR}{\mathbb{R}}   % real numbers
\newcommand{\IN}{\mathbb{N}}   % naturals
\newcommand{\IZ}{\mathbb{Z}}   % integers
\newcommand{\IT}{\mathbb{T}}   % unit circle
\newcommand{\I}{\mathrm{i}}    % imaginary unit
\newcommand{\e}{\mathrm{e}}    % Euler's number
\newcommand{\di}{\mathrm{d}}   % differential
\newcommand{\dist}{\mathrm{dist}}     % distance 

% Non-essential commands, to be removed in final version

%%%%%%%%%%%%%%%%%%%%%%%%%%%%%%%%%%%%%%%%%%%%%
%% AJ abbreviations
\newcommand{\cH}{\mathcal{H}}
\newcommand{\cG}{\mathcal{G}}

\DeclareMathOperator{\diag}{diag}

%%%%%%%%%%%%%%%%%%%%%%%%%%%%%%%%%%%%%%%%%%%%%

\begin{document}
\title{A local directional growth estimate of the resolvent norm}
\author{
H.~D.~Cornean\footnote{Department of Mathematical Sciences, Aalborg University, Skjernvej 4A, 9220 Aalborg \O, Denmark (cornean@math.aau.dk, henrik@math.aau.dk, matarne@math.aau.dk, hanskk@math.aau.dk).} ,
%\and
H.~Garde\footnotemark[1] , 
%\and
A.~Jensen\footnotemark[1] ,
%\and
H.~K.~Kn\"orr\footnotemark[1]
}

\date{}%Version: \today}

\maketitle

\begin{abstract}
We study the resolvent norm of a certain class of closed linear operators on a Hilbert space, including unbounded operators with compact resolvent.
It is shown that for any point in the resolvent set there exist directions in which the norm grows at least quadratically with the distance from this point.
This provides a new proof not using the maximum principle that the resolvent norm of the considered class cannot have local maxima. 
Finally, we give new criteria for the existence of local non-degenerate minima of the resolvent norm and provide examples of (un)bounded non-normal operators having this property.
\end{abstract}

\KEY
local growth estimate, extrema,
pseudospectra,
Schur complement
\EndKEY

\MSC
30D15, % {Functions of a complex variable; Special classes of entire functions and growth estimates}
47A10, % (Operator theory -- General theory of linear operators -- Spectrum, resolvent),\\
15A60 % (Linear and multilinear algebra; matrix theory -- Basic linear algebra -- Norms of matrices, numerical range, applications of functional analysis to matrix theory),
\EndMSC

%%%%%% NEW SECTION %%
\section{Introduction}\label{sec: intro}
%%%%%%

Let $\cH$ be a separable Hilbert space and let $A$ be a densely defined closed operator on $\cH$. Let $\rho(A)$ denote the resolvent set of $A$. Assume that $\rho(A)\neq\emptyset$. 
For $z\in \rho(A)$ the resolvent is denoted by $R_A(z)=(A-zI)^{-1}$. If for some $z\in\rho(A)$ the operator $S(z)=R_A(z)^{\ast}R_A(z)$ satisfies the spectral gap condition stated below we obtain a local growth estimate for the resolvent norm $\norm{R_A(z)}$. We apply the estimate to the question of (non-)existence of local extrema in the resolvent norm and the related question whether the level sets of $\norm{R_A(\cdot)}$ can have interior points. We give sufficient criteria for a local minimum in the resolvent norm and give a number of examples satisfying these criteria.

\begin{assumption}\label{assum}
Let $z\in\rho(A)$ be given. Assume that there exist $a(z)>0$ and $\lambda_{\rm max}(z)>a(z)$ such that $\sigma(S(z))\subseteq[0,a(z)]\cup\{\lambda_{\rm max}(z)\}$ and such that $\lambda_{\rm max}(z)$ is an eigenvalue of $S(z)$.
\end{assumption}

Note that this is an assumption on a single point $z \in \rho(A)$.

Assumption~\ref{assum} is satisfied for all $z\in\rho(A)$ in at least two generic cases. The first case is when $R_A(z_0)$ is compact for some $z_0\in \rho(A)$. Then $S(z)$ is compact and self-adjoint for all $z\in\rho(A)$. 
The second case is an operator of the form $A=\alpha I+K$ where $\alpha\in\IC$ and $K$ is a compact operator. If $\dim\cH = \infty$, $R_A(z)$ is never compact but $S(z)-|\alpha-z|^{-2} I$ is compact and self-adjoint for all $z\in\rho(A)$. In both cases, the norm of $S(z)$ is a discrete positive eigenvalue, but our results also hold true when the norm of $S(z)$ is an infinitely degenerate eigenvalue. In subsection \ref{ex-assum} we introduce a class of operators for which Assumption \ref{assum} is only satisfied on a subset of $\rho(A)$.
 
For $z,z'\in\IC$ we denote by $[z,z']$ the line segment from $z$ to $z'$.
\begin{theorem}\label{thm-horia}
Let $z\in\rho(A)$ be a point for which Assumption~{\rm\ref{assum}} holds.
Then there exist a constant $C>0$ and a point $z'\in \rho(A)$, $z'\neq z$, such that $[z,z']\subset\rho(A)$ and such that for every point $\zeta\in[z,z']$  we have
\begin{equation}\label{eq-thm1(i)}
\norm{R_A(\zeta)} \geq \norm{R_A(z)} + C \abs{\zeta-z}^2.
\end{equation}
\end{theorem}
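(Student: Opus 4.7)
The natural starting point is the identity $\norm{R_A(\zeta)}^2 = \norm{S(\zeta)}$. Since $S(\zeta)$ is self-adjoint and non-negative, $\norm{S(\zeta)} = \sup_{\norm{u}=1}\ip{u}{S(\zeta)u}$, so $\norm{R_A(\zeta)}^2 \geq \norm{R_A(\zeta)\psi}^2$ for every unit vector $\psi$. Assumption~\ref{assum} provides a unit eigenvector $\psi$ of $S(z)$ for $\lambda_{\rm max}(z) = \norm{R_A(z)}^2$. Because $\rho(A)$ is open, the Neumann expansion
\[
R_A(\zeta) = \sum_{n\geq 0}(\zeta - z)^n R_A(z)^{n+1}
\]
converges absolutely on some disc centered at $z$, and truncating it at order two in $\zeta-z$ leaves an $O(|\zeta-z|^3)$ remainder. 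The plan is to insert this expansion into $\norm{R_A(\zeta)\psi}^2$, exhibit a direction in which either the first- or the second-order contribution is strictly positive, and take $[z,z']$ to be a short segment along that direction.

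Using $S(z)\psi = \lambda_{\rm max}(z)\psi$, a direct computation yields
\begin{align*}
\norm{R_A(\zeta)\psi}^2 &= \lambda_{\rm max}(z) + 2\lambda_{\rm max}(z)\,\re\bigl[(\zeta - z)\ip{\psi}{R_A(z)\psi}\bigr] \\
&\quad + \norm{R_A(z)^2\psi}^2\,|\zeta-z|^2 + 2\lambda_{\rm max}(z)\,\re\bigl[(\zeta-z)^2 \ip{\psi}{R_A(z)^2\psi}\bigr] + O(|\zeta-z|^3).
\end{align*}
Let $P$ denote the spectral projection of $S(z)$ onto the $\lambda_{\rm max}(z)$-eigenspace. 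If there is a unit vector $\psi\in\ran P$ with $c:=\ip{\psi}{R_A(z)\psi}\neq 0$, I pick $\arg(\zeta-z) = -\arg c$. Then the linear contribution equals $2\lambda_{\rm max}(z)|c|\,|\zeta-z|$, which dominates the $O(|\zeta-z|^2)$ remainder on a short ray from $z$ and produces at least linear, hence a fortiori quadratic, growth of $\norm{R_A(\zeta)}^2 - \lambda_{\rm max}(z)$.

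If instead $\ip{\psi}{R_A(z)\psi}=0$ for every $\psi\in\ran P$, polarization forces $PR_A(z)P=0$ and the entire linear term vanishes. Fix any unit $\psi\in\ran P$: the coefficient of $|\zeta-z|^2$ is $\norm{R_A(z)^2\psi}^2 > 0$ by injectivity of $R_A(z)$. The remaining $(\zeta-z)^2$-contribution is rendered non-negative by the choice $\arg(\zeta-z) = -\tfrac12\arg\ip{\psi}{R_A(z)^2\psi}$, yielding
\[
\norm{R_A(\zeta)}^2 \geq \lambda_{\rm max}(z) + \tfrac12\norm{R_A(z)^2\psi}^2\,|\zeta-z|^2
\]
on a sufficiently short ray from $z$. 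In either case, taking square roots (using $\lambda_{\rm max}(z)>0$) converts the bound into $\norm{R_A(\zeta)} \geq \norm{R_A(z)} + C|\zeta-z|^2$, and openness of $\rho(A)$ guarantees that the segment $[z,z']$ lies in the resolvent set.

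The main obstacle is the degenerate sub-case $PR_A(z)P=0$: the linear term is identically zero, so one must rule out cancellation between the $|\zeta-z|^2$- and $(\zeta-z)^2$-pieces of the quadratic coefficient. The key point is that these two pieces depend differently on $\arg(\zeta-z)$—one is rotation-invariant, the other has phase $2\arg(\zeta-z)$—so the rotational freedom suffices to select a direction in which they do not cancel. Uniform control of the Neumann remainder then promotes this pointwise estimate to uniform validity along the whole segment $[z,z']$.
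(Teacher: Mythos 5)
Your proposal is correct, and it reaches the conclusion by a genuinely simpler route than the paper's. The paper reduces to the Schur complement (Feshbach) operator $W(\zeta)$ on $\ran P$, proves in Lemma~\ref{lemma1} and Proposition~\ref{lemma2} a two-sided Hausdorff-distance estimate $|\lambda_{\rm max}(\zeta)-\norm{W(\zeta)}|\le C|\Delta\zeta|^3$, replaces $W(\zeta)$ by the explicit $\widetilde W(\zeta)$, and only then invokes the variational bound $\norm{\widetilde W(\zeta)}\ge\ip{\psi}{\widetilde W(\zeta)\psi}$. You short-circuit all of this by applying the variational principle directly to $S(\zeta)$: for the fixed eigenvector $\psi$ of $S(z)$ one has $\norm{R_A(\zeta)}^2=\norm{S(\zeta)}\ge\norm{R_A(\zeta)\psi}^2$, and expanding $\norm{R_A(\zeta)\psi}^2$ via the Neumann series together with $S(z)\psi=\lambda_{\rm max}(z)\psi$ produces exactly the same leading terms (the paper's quadratic coefficient $\ip{R_A(z)\psi}{S(z)R_A(z)\psi}$ is precisely your $\norm{R_A(z)^2\psi}^2$, and the extra Feshbach correction in $\widetilde W$ is non-negative and merely dropped). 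The choice of direction for $z'$, the non-degeneracy argument via injectivity of $R_A(z)$, and the square-root conversion from $\norm{R}^2$ to $\norm{R}$ all match the paper. What the paper's longer detour buys is the two-sided control of $\lambda_{\rm max}(\zeta)$ — stronger information than the one-sided lower bound needed here — but since Theorem~\ref{thm-horia} only requires a lower bound, your argument is shorter and self-contained. One cosmetic point: the polarization step showing $PR_A(z)P=0$ is dispensable — the degenerate case needs only $\ip{\psi}{R_A(z)\psi}=0$ for the single chosen $\psi$, which is the hypothesis you already have.
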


It is well-known that the resolvent norm is subharmonic on $\rho(A)$, see e.g.~\cite[Theorem~4.2]{TE}, which by the maximum principle implies that it cannot have a  local maximum unless it is constant in an open set. 

The theorem implies that for $z\in \rho(A)$ satisfying Assumption~\ref{assum} the resolvent norm $\norm{R_A(\cdot)}$ cannot have a local maximum at $z$. Hence a level set of $\norm{R_A(\cdot)}$ cannot have $z$ as an interior point. We emphasize that the result is local since Assumption~\ref{assum} may be satisfied only in a subset of the resolvent set, see the example given in subsection~\ref{ex-assum}.

If $A$ is an unbounded closed operator with compact resolvent on a Hilbert space (or more generally a complex strictly convex Banach space) then it was proved recently in \cite[Theorem~2.2]{DS} that the resolvent level sets cannot have interior points.
An example of a closed unbounded operator on a Hilbert space with resolvent norm constant in a neighborhood of the origin was given in~\cite[Theorem ~3.2]{S2}.
Since the proof of Theorem~\ref{thm-horia} is based on $\norm{R_A(z)}^2=\norm{S(z)}=\lambda_{\rm max}(z)$, the Schur complement and perturbation theory, the non-existence of local maxima of the resolvent norm can be shown without using a maximum principle.

As a consequence of the proof we have the following two results. 
\begin{corollary}\label{cor12}
Assume that there exists $z\in\rho(A)$ satisfying Assumption~{\rm\ref{assum}} with the following properties:
\begin{itemize}
\item[\rm(i)] $\ip{\psi}{R_A(z) \psi} = 0$ for every eigenvector $\psi$ corresponding to the eigenvalue $\lambda_{\rm max}(z)$ of $R_A(z)^* R_A(z)$.
\item[\rm(ii)] $\ip{\psi}{R_A(z)^2 \psi} = 0$ for at least one of these eigenvectors.
\end{itemize}
Then the resolvent norm $\norm{R_A(\cdot)}$ has a local minimum at $z$.
\end{corollary}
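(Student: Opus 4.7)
The strategy is to iterate the first resolvent identity to obtain a Neumann expansion of $R_A(\zeta)$ around $z$, and then test the quadratic form of $S(\zeta)=R_A(\zeta)^\ast R_A(\zeta)$ against a carefully chosen unit eigenvector $\psi_0$ of $S(z)$. Conditions (i) and (ii) are designed to kill precisely those cross-terms that could otherwise be negative, so that the leading non-trivial correction is manifestly positive and quadratic in $|\zeta-z|$.

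First, invoke (ii) to pick a unit vector $\psi_0$ in the $\lambda_{\rm max}(z)$-eigenspace of $S(z)$ with $\ip{\psi_0}{R_A(z)^2\psi_0}=0$; then (i) automatically yields $\ip{\psi_0}{R_A(z)\psi_0}=0$. The reason (i) is imposed on \emph{every} eigenvector of the eigenvalue $\lambda_{\rm max}(z)$ is precisely this compatibility: (ii) only asserts existence of some $\psi_0$ without specifying which one, and (i) must hold for that particular choice.

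For $|\zeta-z|\,\norm{R_A(z)}<1$ the Neumann series $R_A(\zeta)\psi_0 = \sum_{n\geq 0}(\zeta-z)^n R_A(z)^{n+1}\psi_0$ converges. Expanding $\norm{R_A(\zeta)\psi_0}^2 = \ip{R_A(\zeta)\psi_0}{R_A(\zeta)\psi_0}$ through order two in $\zeta-z$, and using $S\psi_0 = \lambda_{\rm max}(z)\psi_0$ together with the self-adjointness of $S$ to reduce each inner product $\ip{R_A(z)\psi_0}{R_A(z)^k\psi_0}$ ($k=2,3$) to $\lambda_{\rm max}(z)\,\ip{\psi_0}{R_A(z)^{k-1}\psi_0}$, one obtains
\begin{multline*}
\norm{R_A(\zeta)\psi_0}^2 = \lambda_{\rm max}(z) + 2\lambda_{\rm max}(z)\,\re\bigl[(\zeta-z)\ip{\psi_0}{R_A(z)\psi_0}\bigr] \\
+ |\zeta-z|^2\norm{R_A(z)^2\psi_0}^2 + 2\lambda_{\rm max}(z)\,\re\bigl[(\zeta-z)^2\ip{\psi_0}{R_A(z)^2\psi_0}\bigr] + O(|\zeta-z|^3).
\end{multline*}
Both bracketed inner products vanish by (i) and (ii), leaving the nonnegative leading correction $|\zeta-z|^2\norm{R_A(z)^2\psi_0}^2$.

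Since $R_A(z)$ is injective, $\norm{R_A(z)^2\psi_0}>0$, and combining $\norm{R_A(\zeta)}^2\geq\norm{R_A(\zeta)\psi_0}^2$ with $\norm{R_A(z)}^2=\lambda_{\rm max}(z)$ yields $\norm{R_A(\zeta)}>\norm{R_A(z)}$ for every $\zeta$ in some sufficiently small punctured neighbourhood of $z$, establishing that $z$ is a (strict) local minimum. The only real subtlety is the careful bookkeeping of the cross-terms in $\norm{R_A(\zeta)\psi_0}^2$; once that is organised, (i) and (ii) cancel precisely the two obstructions to positivity and the conclusion is immediate.
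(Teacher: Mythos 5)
Your proposal is correct, and it is a genuinely different and more elementary route than the paper's. The paper obtains Corollary~\ref{cor12} as a byproduct of the Schur-complement machinery built for Proposition~\ref{propo9}: one works on $\ran P$, estimates the second-order approximant $\widetilde W(\zeta)$ in \eqref{hc12}, invokes Lemma~\ref{lemma1} and Proposition~\ref{lemma2} to keep $\lambda_{\rm max}(\zeta)$ within $O(\abs{\Delta\zeta}^3)$ of $\norm{\widetilde W(\zeta)}$, and then reads off the quadratic lower bound \eqref{estimate} with $C_2=\ip{R_A(z)\psi}{S(z)R_A(z)\psi}>0$. You bypass all of this by testing $S(\zeta)$ against the single fixed vector $\psi_0$ and expanding $\norm{R_A(\zeta)\psi_0}^2$ via the Neumann series; the trivial bound $\norm{R_A(\zeta)}^2\geq\norm{R_A(\zeta)\psi_0}^2$ replaces the Hausdorff-distance comparison of spectra. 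A small but notable dividend of your argument is that it does not use the spectral-gap part of Assumption~\ref{assum} at all: it only needs that $\lambda_{\rm max}(z)=\norm{S(z)}$ is attained as an eigenvalue, so in principle your proof of the corollary holds under a weaker hypothesis. What the paper's heavier approach buys is that the same machinery simultaneously yields Theorem~\ref{thm-horia} and Corollary~\ref{cor13} (where one needs a two-sided, directional control on $\lambda_{\rm max}(\zeta)$, not merely a lower bound from one test vector), whereas your direct computation is tailored to the local-minimum statement. The bookkeeping in your expansion is correct: the reduction $\ip{R_A(z)\psi_0}{R_A(z)^k\psi_0}=\lambda_{\rm max}(z)\ip{\psi_0}{R_A(z)^{k-1}\psi_0}$ follows from $S(z)\psi_0=\lambda_{\rm max}(z)\psi_0$ and self-adjointness of $S(z)$, the $O(\abs{\zeta-z}^3)$ remainder is uniformly controlled once $\abs{\zeta-z}\norm{R_A(z)}\leq 1/2$, and injectivity of $R_A(z)^2$ makes the surviving quadratic coefficient strictly positive, so you even get a strict local minimum.
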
 
In subsections~\ref{sec: block diagonals}--\ref{infinite} we present examples of non-normal matrices and unbounded closed operators such that the resolvent norm has a local minimum.
\begin{corollary}\label{cor13}
Assume that there exists $z\in\rho(A)$ satisfying Assumption~{\rm\ref{assum}} with the following property:
\begin{itemize}
\item[\rm(i)] $\ip{\psi}{R_A(z) \psi} \neq 0$ for an eigenvector $\psi$ corresponding to the eigenvalue $\lambda_{\rm max}(z)$ of $R_A(z)^* R_A(z)$.
\end{itemize}
Then there exist a constant $C>0$ and a point $z'\in \rho(A),\ z'\neq z,$ such that $[z,z']\subset\rho(A)$ and such that for every point $\zeta\in[z,z']$  we have
\begin{equation}\label{cor1}
\norm{R_A(\zeta)} \geq \norm{R_A(z)} + C \abs{\zeta-z}.
\end{equation}
\end{corollary}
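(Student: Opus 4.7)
My plan is to bypass the Schur complement / degenerate perturbation machinery used for Theorem~\ref{thm-horia} and instead bound
\[
\norm{R_A(\zeta)}^2=\sup_{\norm{\phi}=1}\norm{R_A(\zeta)\phi}^2
\]
from below by testing against a \emph{fixed} unit eigenvector $\psi$ of $S(z)$ for $\lambda_{\rm max}(z)$ with $\ip{\psi}{R_A(z)\psi}\neq 0$, which exists by the hypothesis. Because $\psi$ does not depend on $\zeta$, this reduces the problem to expanding the scalar map $\zeta\mapsto\norm{R_A(\zeta)\psi}^2$ to first order around $\zeta=z$ and choosing the direction of approach so that the first-order contribution is positive.

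Set $T:=R_A(z)$, $\lambda:=\lambda_{\rm max}(z)=\norm{T}^2$ and $h:=\zeta-z$. The Neumann series $R_A(\zeta)=T+hT^2+O(\abs{h}^2)$ converges in operator norm on any closed disk $\abs{h}\leq r<\norm{T}^{-1}$, which lies inside $\rho(A)$. Using $T^*T\psi=\lambda\psi$ one has the key simplification
\[
\ip{T\psi}{T^2\psi}=\ip{T^*T\psi}{T\psi}=\lambda\,\ip{\psi}{T\psi},
\]
so expanding $\ip{R_A(\zeta)\psi}{R_A(\zeta)\psi}$ and collecting the first-order terms yields
\[
\norm{R_A(\zeta)\psi}^2 = \lambda + 2\lambda\,\re\bigl[h\,\ip{\psi}{T\psi}\bigr] + O(\abs{h}^2),
\]
with a remainder uniform in $h$ on the disk.

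Now choose $\theta:=-\arg\ip{\psi}{T\psi}$ and set $z':=z+t_0e^{\I\theta}$ with $t_0>0$ small enough that $[z,z']\subset\rho(A)$, the uniform $O(\abs{h}^2)$-term is at most half of the linear term $\lambda\abs{\ip{\psi}{T\psi}}\abs{h}$, and $\norm{R_A(\zeta)}+\norm{R_A(z)}\leq 3\norm{T}$ along the segment (the latter by continuity of the resolvent). Along $[z,z']$ one has $h\,\ip{\psi}{T\psi}=\abs{h}\abs{\ip{\psi}{T\psi}}$, so the testing inequality gives
\[
\norm{R_A(\zeta)}^2-\norm{R_A(z)}^2\geq \lambda\,\abs{\ip{\psi}{T\psi}}\,\abs{\zeta-z},
\]
and dividing by $\norm{R_A(\zeta)}+\norm{R_A(z)}$ produces \eqref{cor1} with $C$ an explicit positive multiple of $\abs{\ip{\psi}{R_A(z)\psi}}$. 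The only step requiring real care is the simultaneous uniformity of the Neumann remainder and of the upper bound on the resolvent norm along the whole segment, both of which follow from the geometric convergence of the series on a closed disk strictly inside its disk of convergence.
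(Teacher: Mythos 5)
Your argument is correct, and it takes a genuinely more elementary route than the paper's. The paper obtains Corollary~\ref{cor13} as a byproduct of the machinery built for Theorem~\ref{thm-horia}: it introduces the Riesz projection $P$, the Schur complement $F(\zeta,\lambda)$, the reduced operator $\widetilde{W}(\zeta)$, and only then reads off the linear growth from the first-order term in \eqref{order1}, transferring it back to $\lambda_{\rm max}(\zeta)$ via Proposition~\ref{lemma2}. You bypass all of that by simply bounding $\norm{R_A(\zeta)}^2 \geq \norm{R_A(\zeta)\psi}^2$ for the \emph{fixed} test vector $\psi$, expanding the scalar quantity $\norm{R_A(\zeta)\psi}^2$ to first order via the Neumann series, and using $S(z)\psi=\lambda_{\rm max}(z)\psi$ to collapse the cross term to $2\lambda_{\rm max}(z)\re\bigl[h\,\ip{\psi}{R_A(z)\psi}\bigr]$. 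This buys two things. First, it is self-contained and much shorter than routing through the Schur-complement analysis. Second, it reveals that the spectral-gap condition $\sigma(S(z))\subseteq[0,a(z)]\cup\{\lambda_{\rm max}(z)\}$ of Assumption~\ref{assum} is not actually used for this corollary: all your argument needs is that $\norm{S(z)}=\lambda_{\rm max}(z)$ is attained as an eigenvalue with an eigenvector satisfying (i). The gap is genuinely needed for the quadratic estimate of Theorem~\ref{thm-horia} (where the degenerate case forces the second-order analysis via $P^\perp$), but not here. The one computational point worth double-checking, which you handle correctly, is the identity $\ip{T\psi}{T^2\psi}=\ip{T^*T\psi}{T\psi}=\lambda_{\rm max}(z)\ip{\psi}{T\psi}$ using the self-adjointness of $S(z)$ and the realness of $\lambda_{\rm max}(z)$; the uniformity of the $O(\abs{h}^2)$ remainder on a closed disk $\abs{h}\leq r<\norm{T}^{-1}$ is standard for the geometric Neumann series and also guarantees $[z,z']\subset\rho(A)$ for free.
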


If $A$ is normal and has compact resolvent, then condition (i) in Corollary~\ref{cor13} holds for all $z\in\rho(A)$. We note that in this  case there is a simple direct proof of \eqref{cor1}, see Remark~\ref{normal-remark}. There may exist local minima in the case where the conditions in Corollary~\ref{cor12} are not satisfied. An example is the normal $3\times 3$-matrix $A=\diag(1,\e^{2\I\pi/3},\e^{-2\I\pi/3})$. Since $A$ is normal we have $\norm{R_A(z)}=1/\dist(z,\sigma(A))$. Then it is geometrically obvious that $z=0$ is a local minimum.

In the case $\dim \cH=2$ the resolvent norm cannot have any local extremum at all. 
This result is stated as follows:

\begin{theorem} \label{thm-henrik}
Let $A\in \IC^{2\times 2}$.
Then the resolvent norm $\norm{R_A(\cdot)}$ is symmetric with respect to $\tr(A)/2$ which is the average of the eigenvalues of $A$, i.e.
\begin{equation*}
\norm{R_A(\tr(A)/2+z)} = \norm{R_A(\tr(A)/2-z)}.
\end{equation*} 
Furthermore, 
\begin{enumerate}[\rm(i)]
\item If $A$ has an eigenvalue with algebraic multiplicity $2$, then $\norm{R_A(\cdot)}$ is a strictly decreasing radial function with center at the eigenvalue $\tr(A)/2$.
\item If $A$ has distinct eigenvalues then $\norm{R_A(\cdot)}$ has a saddle point at $\tr(A)/2$ and the following results hold.
\begin{enumerate}[\rm(a)]
\item If $A$ is normal, then the critical points consist of a line through $\tr(A)/2$ perpendicular to the line through the two eigenvalues of $A$. $\norm{R_A(\cdot)}$ is not real-differentiable on this line.
\item If $A$ is not normal then $\norm{R_A(\cdot)}$ is real-differentiable on $\rho(A)$ and $\tr(A)/2$ is the only critical point.
\end{enumerate} 
None of the critical points are local extrema.
\end{enumerate}
\end{theorem}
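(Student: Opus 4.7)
The plan is to reduce the theorem to an explicit computation via the standard $2{\times}2$ identity
\begin{equation*}
\norm{M}^{2}=\tfrac{1}{2}\bigl(\tr(M^{\ast}M)+\sqrt{\tr(M^{\ast}M)^{2}-4\abs{\det M}^{2}}\bigr).
\end{equation*}
Since the spectral norm is unitarily invariant, I may replace $A$ by its Schur form $A=\bigl(\begin{smallmatrix}\lambda_{1}&d\\0&\lambda_{2}\end{smallmatrix}\bigr)$. Setting $\alpha=\tr(A)/2$ and $\delta=(\lambda_{1}-\lambda_{2})/2$, so $\lambda_{1,2}=\alpha\pm\delta$, a direct cofactor computation gives
\begin{equation*}
\tr\bigl(R_A(\alpha+z)^{\ast}R_A(\alpha+z)\bigr)=\frac{2\abs{\delta}^{2}+\abs{d}^{2}+2\abs{z}^{2}}{\abs{z^{2}-\delta^{2}}^{2}},\qquad\abs{\det R_A(\alpha+z)}^{2}=\frac{1}{\abs{z^{2}-\delta^{2}}^{2}}.
\end{equation*}
Both are invariant under $z\mapsto -z$, which is the stated symmetry of $\norm{R_A(\cdot)}$ about $\alpha$.

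For part~(i) one has $\delta=0$, so $\norm{R_A(\alpha+z)}^{2}$ depends only on $\abs{z}^{2}$; a one-variable derivative computation shows the resulting expression is strictly decreasing in $\abs{z}$. For part~(ii) with $\delta\neq 0$, a further rotation (replacing $z$ by $\e^{\I\theta}z$) makes $\delta^{2}$ a positive real $\mu=\abs{\delta}^{2}$. Then $\norm{R_A(\alpha+x+\I y)}^{2}=h(p,q)$ with $p=x^{2}+y^{2}$ and $q=\mu(x^{2}-y^{2})$, and the chain rule yields
\begin{equation*}
\partial_{x}\norm{R_A}^{2}=2x(h_{p}+\mu h_{q}),\qquad \partial_{y}\norm{R_A}^{2}=2y(h_{p}-\mu h_{q}).
\end{equation*}
The candidates for critical points are therefore $\alpha$ itself, points on the real or imaginary axis through $\alpha$ where $h_{p}\pm\mu h_{q}=0$, and common zeros of $h_{p}$ and $h_{q}$.

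In case~(ii.b), non-normality is equivalent to $d\neq 0$; this makes the discriminant $\tr(R_A^{\ast}R_A)^{2}-4\abs{\det R_A}^{2}$ strictly positive throughout $\rho(A)$, proving the real-analyticity assertion. Substituting the explicit partials of $f=\tr(R_A^{\ast}R_A)$ and $g=\abs{\det R_A}^{2}$, the system $h_{p}=h_{q}=0$ reduces to $\abs{z^{2}-\delta^{2}}^{2}=0$, which is excluded on $\rho(A)$; and each axis equation reduces, after the substitution $s=\abs{d}^{2}+2\mu\mp 2\mu>0$ and $t=\abs{z}^{2}\pm\mu$, to the identity
\begin{equation*}
s(s+3t)+(s+t)\sqrt{s(s+4t)}=0
\end{equation*}
with $s+3t>0$ and $s+4t>0$, which has no solution. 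Hence $\alpha$ is the unique critical point. Restricting $\norm{R_A(\cdot)}$ to the line through the two eigenvalues exhibits a strict minimum at $\alpha$ (the norm blows up at $\lambda_{1,2}$), and restricting to the perpendicular bisector of the eigenvalues exhibits a strict maximum at $\alpha$ (the norm tends to $0$ at infinity), so $\alpha$ is a saddle.

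The normal case~(ii.a) has $d=0$ and therefore $\norm{R_A(z)}=1/\dist(z,\{\lambda_{1},\lambda_{2}\})$. The perpendicular bisector of the two eigenvalues is exactly the locus where the two distances coincide and where $\norm{R_A(\cdot)}$ fails to be real-differentiable; off the bisector the function equals $1/\abs{z-\lambda_{i}}$ for the nearer eigenvalue $\lambda_{i}$, with nonzero gradient, so the critical set equals the bisector. A direct geometric inspection shows that transverse to the bisector the function has a strict minimum at every bisector point, whereas along the bisector it has the form $1/\sqrt{\abs{\delta}^{2}+r^{2}}$ with $r$ the signed distance from $\alpha$, strictly maximized at $r=0$ and strictly monotone on each side. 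Hence $\alpha$ is a saddle and no bisector point is a local extremum. The principal technical obstacle is the axis-equation reduction in~(ii.b); the change of variables $(s,t)$ is the decisive step, turning an unwieldy rational identity into a sum of manifestly positive terms, after which the whole theorem falls out.
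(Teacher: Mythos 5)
Your proposal is correct, and it follows the same overall strategy as the paper: an explicit $2\times2$ singular-value formula for $\norm{R_A(z)}^2$, a shift by $\tr(A)/2$, and a direct critical-point analysis. The technical execution, however, differs in a few substantive ways. You reduce $A$ to Schur form and absorb the phase of $\delta^2=((\lambda_1-\lambda_2)/2)^2$ by rotating the $z$-variable (so $\delta^2=\mu>0$), whereas the paper instead rescales to $A_2=\lambda^{-1}A_1$ so that the eigenvalues become $\pm1$ and parametrizes all traceless matrices of determinant $-1$ with the single invariant $k\geq 2$; the two normalizations are equivalent, and your $|d|^2$ plays the role of $k-2$. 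The deeper difference is in how the gradient equations are dispatched. The paper writes
\begin{align*}
\partial_{x_1}\norm{R_{A_2}(z)}^2 &= 2x_1\norm{R_{A_2}(z)}^4\bigl((k+2)(w^2-4h)^{-1/2}-1\bigr),\\
\partial_{x_2}\norm{R_{A_2}(z)}^2 &= 2x_2\norm{R_{A_2}(z)}^4\bigl((k-2)(w^2-4h)^{-1/2}-1\bigr),
\end{align*}
and the inequality $w^2-4h\geq (k+2)(k-2)>(k-2)^2$ immediately forces $x_2=0$ and then $x_1\in\{0,\pm1\}$; this factorization is the whole argument. You instead keep the square root implicit in the auxiliary function $h(p,q)$ and, on each axis, make the change of variables $s=|d|^2+2\mu\mp2\mu$, $t=|z|^2\pm\mu$ to turn the critical-point equation into $s(s+3t)+(s+t)\sqrt{s(s+4t)}=0$, with each factor manifestly positive — one more layer of substitution, but it avoids solving for $w^2-4h$ explicitly. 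Both routes are sound: the paper's gradient factorization is a bit crisper (and it is reused later in Lemma~5.2 via $g(k,\theta)$), while your $(s,t)$ identity makes the positivity term-by-term transparent. Your saddle argument (strict restricted minimum along the eigenvalue line, strict restricted maximum along the perpendicular bisector, combined with uniqueness of the critical point) and your direct geometric handling of the normal case via $\norm{R_A(z)}=1/\dist(z,\sigma(A))$ are equivalent to the paper's treatment.
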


We can use the result in Theorem~\ref{thm-horia} to give a result on the pseudospectra. We recall the definition. For $\varepsilon>0$ the $\varepsilon$-pseudospectrum of $A$ is defined as
\begin{equation*}
\sigma_\varepsilon(A)=\{z\in \IC\,|\, \norm{R_A(z)} > \varepsilon^{-1}\}.
\end{equation*}
See~\cite{TE} for further information on pseudospectra.
We use the convention that $\norm{R_A(z)} = \infty$ if $(A - zI)$ is not invertible. We state the result in the finite dimensional case.
\begin{theorem}\label{path}
Let $A\in\IC^{N\times N}$. Then any point $z\in \sigma_\varepsilon(A)$ can be linked to one of the eigenvalues of $A$ through a finite polygonal path contained in $\sigma_\varepsilon(A)$.  
\end{theorem}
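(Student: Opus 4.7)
The plan is to show that each connected component of $\sigma_\varepsilon(A)$ contains an eigenvalue of $A$; combined with the polygonal-connectedness of open connected subsets of $\IC\cong\IR^2$, this immediately yields the required path. First I would observe that $\sigma_\varepsilon(A)$ is open in $\IC$, being the preimage of the open set $(1/\varepsilon,\infty]$ under $w\mapsto\norm{R_A(w)}$ (continuous on $\rho(A)$ and $+\infty$ on the finite set $\sigma(A)\subset\sigma_\varepsilon(A)$). Let $U$ denote the connected component of $\sigma_\varepsilon(A)$ containing $z$. The classical argument that the equivalence classes of the relation ``joined by a polygonal path inside $U$'' are open, combined with connectedness of $U$, shows that $U$ is polygonally connected. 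Hence it suffices to prove that $U$ meets $\sigma(A)$.

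Suppose for contradiction that $U\cap\sigma(A)=\emptyset$, so $U\subset\rho(A)$. From the Neumann bound $\norm{R_A(w)}\leq(\abs{w}-\norm{A})^{-1}$ valid for $\abs{w}>\norm{A}$, the set $U$ is bounded; moreover, boundary points of $\sigma_\varepsilon(A)$ satisfy $\norm{R_A(\cdot)}=1/\varepsilon<\infty$, so $\partial U\subset\rho(A)$ and consequently $\overline{U}\subset\rho(A)$ is compact. Continuity of $\norm{R_A(\cdot)}$ on $\overline{U}$ then yields a maximizer $z^{\star}\in\overline{U}$; since the norm equals $1/\varepsilon$ on $\partial U$ but strictly exceeds $1/\varepsilon$ throughout $U$, one has $z^{\star}\in U$, so that $z^{\star}$ is a local maximum of the resolvent norm in $\rho(A)$.

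To conclude I would apply Theorem~\ref{thm-horia} at $z^{\star}$. Assumption~\ref{assum} is automatic in the finite-dimensional setting: $S(z^{\star})=R_A(z^{\star})^{\ast}R_A(z^{\star})$ is a positive definite $N\times N$ matrix with finite spectrum, so one may take $\lambda_{\rm max}(z^{\star})=\norm{S(z^{\star})}$ together with any $a(z^{\star})\in(0,\lambda_{\rm max}(z^{\star}))$ lying above the next largest distinct eigenvalue of $S(z^{\star})$ (any value in $(0,\lambda_{\rm max}(z^{\star}))$ works if $S(z^{\star})$ is a scalar multiple of the identity). Theorem~\ref{thm-horia} then furnishes a direction along which $\norm{R_A(\cdot)}$ strictly exceeds $\norm{R_A(z^{\star}})$, contradicting maximality. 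Hence $U$ must contain an eigenvalue of $A$, and polygonal-connectedness of $U$ joins $z$ to this eigenvalue by a finite polygonal path inside $U\subset\sigma_\varepsilon(A)$. The main obstacle here is the topological bookkeeping that guarantees $\overline{U}$ is compact and contained in $\rho(A)$, placing the attained maximum safely inside $U$ where Theorem~\ref{thm-horia} delivers the contradiction; the verification of Assumption~\ref{assum} in finite dimensions and the construction of the polygonal path are then routine.
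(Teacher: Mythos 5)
Your argument is correct, but it takes a genuinely different route from the paper. The paper constructs the polygonal path explicitly: starting from $z$, it iteratively uses the directional growth estimate of Theorem~\ref{thm-horia} to pick a next point with strictly larger resolvent norm, and a compactness argument on $K = \overline{\sigma_\varepsilon(A)} \setminus \bigcup_j B_{\varepsilon/2}(\lambda_j)$ shows the sequence must exit $K$ in finitely many steps and land inside a ball around an eigenvalue; the resulting polygon lies in $\sigma_\varepsilon(A)$ because the construction caps the drop in $\norm{R_A(\cdot)}$ along each segment by $\delta = (f(z)-\varepsilon^{-1})/2$. You instead take the classical route: show each connected component of $\sigma_\varepsilon(A)$ contains an eigenvalue, and invoke polygonal connectedness of open connected planar sets. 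The interesting point is that you replace the usual subharmonicity/maximum-principle step in the classical argument with Theorem~\ref{thm-horia}, which is exactly in the spirit of the paper (the authors stress they want to avoid the maximum principle), so the ingredients are the same but the architecture is not. Your version is shorter and topologically cleaner (the compactness of $\overline{U}\subset\rho(A)$ and localization of the maximizer $z^\star$ in the interior are handled correctly, and your reduction of Assumption~\ref{assum} to the finite-dimensional setting is sound, including the degenerate case $S(z^\star)=cI$). What it gives up is precisely what the paper highlights as the novelty: the paper's proof \emph{constructs} the path directly via the growth estimate, whereas yours produces the eigenvalue by contradiction and then appeals to an abstract polygonal-connectedness fact, so the path is not built from the resolvent-norm dynamics. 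Both are valid proofs of the statement as written.
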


Note that this theorem implies the well-known result \cite[Theorem 2.4]{TE} that there must be at least one eigenvalue of $A$ in each connected component of $\sigma_{\varepsilon}(A)$.
Thus a pseudospectrum consists of at most $J$ connected components where $J$ is the number of distinct eigenvalues of $A$.
The novelty here is that we prove this fact by constructing a path inside a given pseudospectrum which connects any $z\in\sigma_{\varepsilon}(A)$ to an eigenvalue. This constructive approach is a consequence of the local growth estimate in \eqref{eq-thm1(i)}.

This article is organized as follows. The proofs of Theorems~\ref{thm-horia} and~\ref{thm-henrik} are given in sections~\ref{sec: proof horia} and~\ref{sec: proof henrik}. We construct the polygonal path for Theorem~\ref{path} in section~\ref{sec: connected components}. 
In section~\ref{sec: local minima} we provide the examples already mentioned above. In subsection~\ref{ex-assum} we give a class of operators for which Assumption~\ref{assum} only holds in a proper subset of the resolvent set. The examples in subsections~\ref{sec: block diagonals} and~\ref{sec: general example} are non-normal matrices satisfying the conditions in Corollary~\ref{cor12} such that the resolvent norm has a local minimum at the origin. In subsection~\ref{infinite} we finally give examples of unbounded closed operators with or without compact resolvent that satisfy Assumption~\ref{assum} such that the resolvent norm has a local minimum at the origin.

%%%%%% NEW SECTION %%
\section{Proof of Theorem~\ref{thm-horia}}%
\label{sec: proof horia}

Let $z\in\rho(A)$ such that Assumption~{\rm\ref{assum}} holds. This  $z$ is fixed throughout the proof. As above we let $S(z)=R_A(z)^{\ast}R_A(z)$. Note that $S(z)$ is a bounded, self-adjoint, and strictly positive operator.

For $\zeta\in\IC$ we introduce the notation $\Delta\zeta=\zeta-z$. Fix $\delta_1>0$ such that $\abs{\Delta\zeta}\leq\delta_1$ implies $\zeta\in\rho(A)$. Then a simple computation shows that we have
\begin{equation}\label{eq1}
\norm{S(\zeta)-S(z)}\leq C\abs{\Delta\zeta}, \quad \abs{\Delta\zeta}\leq\delta_1,
\end{equation}
where $C$ depends on $z$ and $\delta_1$.

Take $\delta_2=\frac12(\lambda_{\rm max}(z)-a(z))$ with $a(z)$ from Assumption~\ref{assum}. We can find $\delta_3>0$ such that for all $\abs{\Delta\zeta}\leq\delta_3$ and all $\lambda$ with $\abs{\lambda-\lambda_{\rm max}(z)}=\delta_2$ we have $\lambda\in\rho(S(\zeta))$ and
\begin{align}
(S(\zeta)-\lambda I)^{-1}&=(S(z)-\lambda I)^{-1}\bigl[
I+(S(\zeta)-S(z))(S(z)-\lambda I)^{-1}\bigr]^{-1}\notag\\
&=(S(z)-\lambda I)^{-1}\notag\\
&\quad-(S(z)-\lambda I)^{-1}(S(\zeta)-S(z))(S(z)-\lambda I)^{-1}\notag\\
&\qquad\cdot\bigl[
I+(S(\zeta)-S(z))(S(z)-\lambda I)^{-1}\bigr]^{-1}.\label{neumann}
\end{align}
We now use some standard arguments from perturbation theory, see~\cite{Kato}. Note that the map $z\mapsto S(z)$ is norm continuous but not analytic.
We define the Riesz projections
\begin{equation}
P(\zeta)=\frac{-1}{2\pi \I}\int_{\abs{\lambda-\lambda_{\rm max}(z)}=\delta_2} (S(\zeta)-\lambda I)^{-1}\di\lambda.
\end{equation}
We write $P=P(z)$, which is the eigenprojection of the eigenvalue $\lambda_{\rm max}(z)$. 
Using \eqref{neumann} we can find a $\delta_4$, $0<\delta_4\leq\delta_3$, such that for all $\abs{\Delta\zeta}\leq\delta_4$ we have $\norm{P(\zeta)-P}<1$. There exists a family of unitary operators $U(\zeta)\colon\ran P(\zeta)\to\ran P$ such that 
$U(\zeta)P(\zeta)=P U(\zeta)$, see~\cite[I-\S6.8]{Kato}. Note that this result holds in infinite dimensions. Together with the upper semi-continuity of the parts of the spectrum of $S(\zeta)$, see~\cite[IV-\S3.4]{Kato}, we conclude that 
$\norm{S(\zeta)}=\norm{S(\zeta)P(\zeta)}$. 
Let us define
\begin{equation}\label{defD}
D=\{\lambda\in \IC\,|\, \abs{\lambda-\lambda_{\rm max}(z)}<\delta_2\},
\end{equation}
such that $\sigma(S(\zeta)P(\zeta))=\sigma(S(\zeta))\cap D$.
We have
\begin{equation}
\dist(\lambda_{\rm max}(z),\sigma(S(\zeta)P(\zeta)))\leq C\abs{\Delta\zeta},\quad
\abs{\Delta\zeta}\leq\delta_4.
\end{equation}

Let $P^{\perp}=I-P$. Then for all $\lambda$ satisfying $\abs{\lambda-\lambda_{\rm max}(z)}\leq C\abs{\Delta\zeta}$ and all $\abs{\Delta\zeta}\leq\delta_4$ we see that $P^{\perp}(S(\zeta)-\lambda I)P^{\perp}$ is invertible in $\ran P^{\perp}$. We now use the Schur complement based on $P$ and $P^{\perp}$, also known as the Feshbach formula, see e.g.\ \cite[Equations~(6.1)-(6.2)]{Nen}.

We have that $\lambda\in\rho(S(\zeta))$ if and only if the Schur complement
\begin{equation}
F(\zeta,\lambda)=PS(\zeta)P-\lambda P+
PS(\zeta)P^{\perp}\bigl(\lambda P^{\perp}-P^{\perp} S(\zeta)P^{\perp}\bigr)^{-1}P^{\perp} S(\zeta)P
\end{equation}
is invertible in $\ran P$. In the affirmative case we have
\begin{equation}\label{inverseF}
F(\zeta,\lambda)^{-1}=P(S(\zeta)-\lambda I)^{-1}P.
\end{equation}

We note that with 
$\abs{\lambda-\lambda_{\rm max}(z)}\leq C \abs{\Delta\zeta}$
we have
\begin{equation}\label{perp-comp}
\norm{
\bigl(P^{\perp}(S(\zeta)-\lambda I)P^{\perp}\bigr)^{-1}-
\bigl(P^{\perp}(S(z)-\lambda_{\rm max}(z) I)P^{\perp}\bigr)^{-1}
}\leq C\abs{\Delta\zeta},
\end{equation}
where the norm is the operator norm on $\ran P^{\perp}$.

\smallskip

Let $M$ and $N$ be two non-empty compact sets in $\IC$.
The Hausdorff distance between $M$ and $N$ is defined as 
\begin{equation*}
d_{\rm H}(M,N) = \max\bigl\{\sup_{\mu\in N} \dist(\mu,M),\; \sup_{\mu\in M} \dist(\mu,N)\bigr\}.
\end{equation*}
The geometric interpretation is that given any $\mu\in M$ we can find at least one 
$\nu\in N$ such that $\abs{\mu-\nu} \leq d_{\rm H}(M,N)$, and vice versa. 

\begin{lemma}\label{lemma1}
Let $\zeta \in \rho(A)$ and write $\Delta S(\zeta) = S(\zeta) - S(z)$.
Define the operator
\begin{align*}
W(\zeta)&= \lambda_{\rm max}(z) P + P\Delta S(\zeta)P\\
&\quad
+P\Delta S(\zeta)P^{\perp} \bigl(\lambda_{\rm max}(z)P^{\perp}-P^{\perp} S(z)P^{\perp} \bigr)^{-1} P^{\perp} \Delta S(\zeta)P \end{align*}
on  $\ran P$.
Then the spectrum of $W(\zeta)$ is at a Hausdorff distance of order $\abs{\Delta\zeta}^3$ from $\sigma(S(\zeta))\cap D$.
\end{lemma}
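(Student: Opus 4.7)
The plan is to exploit the Schur complement $F(\zeta,\lambda)$, which determines whether $\lambda\in D$ lies in $\sigma(S(\zeta))$ via invertibility on $\ran P$, and to expand its middle resolvent factor in a first-order Neumann series around the ``unperturbed'' operator $T := \lambda_{\rm max}(z) P^\perp - P^\perp S(z) P^\perp$. By construction $W(\zeta)$ should arise as precisely the second-order truncation of this expansion, so that $F(\zeta,\lambda) - (W(\zeta) - \lambda P)$ is of cubic order in $\abs{\Delta\zeta}$. From that point the conclusion follows by a standard self-adjoint perturbation argument.

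Concretely, I would first use $PS(z)P = \lambda_{\rm max}(z) P$ and $PS(z)P^\perp = P^\perp S(z) P = 0$, available from Assumption~\ref{assum}, to rewrite
\begin{equation*}
F(\zeta,\lambda) = \lambda_{\rm max}(z) P + P\Delta S(\zeta) P - \lambda P + P\Delta S(\zeta) P^\perp \bigl(\lambda P^\perp - P^\perp S(\zeta) P^\perp\bigr)^{-1} P^\perp \Delta S(\zeta) P.
\end{equation*}
For $\lambda$ in the $C\abs{\Delta\zeta}$-disk around $\lambda_{\rm max}(z)$, the second resolvent identity together with \eqref{eq1} and \eqref{perp-comp} yields
\begin{equation*}
\bigl(\lambda P^\perp - P^\perp S(\zeta) P^\perp\bigr)^{-1} = T^{-1} + R(\zeta,\lambda), \qquad \norm{R(\zeta,\lambda)} \leq C\abs{\Delta\zeta},
\end{equation*}
with $\norm{T^{-1}} \leq (\lambda_{\rm max}(z)-a(z))^{-1}$ thanks to Assumption~\ref{assum}. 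Substituting this expansion and using $\norm{P\Delta S(\zeta) P^\perp}, \norm{P^\perp \Delta S(\zeta) P} \leq C\abs{\Delta\zeta}$ produces
\begin{equation*}
F(\zeta,\lambda) = W(\zeta) - \lambda P + E(\zeta,\lambda), \qquad \norm{E(\zeta,\lambda)} \leq C\abs{\Delta\zeta}^3,
\end{equation*}
uniformly on the same disk.

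I would next observe that $W(\zeta)$ is self-adjoint on $\ran P$: the middle factor $T^{-1}$ is self-adjoint and the outer sandwich factors are adjoint to one another. Combined with the self-adjointness of $S(\zeta)$ this forces $\sigma(W(\zeta))$ and $\sigma(S(\zeta))\cap D$ to lie on $\IR$, and it makes $F(\zeta,\lambda)$ self-adjoint for real $\lambda$. One Hausdorff direction is then immediate: given $\nu \in \sigma(S(\zeta)) \cap D$, one has $0 \in \sigma(F(\zeta,\nu)) = \sigma\bigl(W(\zeta) + E(\zeta,\nu) - \nu P\bigr)$, so Weyl's inequality applied to the self-adjoint pair $W(\zeta)$ and $W(\zeta)+E(\zeta,\nu)$ gives $\dist(\nu,\sigma(W(\zeta))) \leq \norm{E(\zeta,\nu)} \leq C\abs{\Delta\zeta}^3$. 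For the reverse direction, given $\mu \in \sigma(W(\zeta))$, I would track the piece of $\sigma(W(\zeta)+E(\zeta,\nu))$ closest to $\mu$ by a Riesz projection over a small disk centered at $\mu$, defining $\tilde\mu(\nu)$ accordingly. A short resolvent-identity computation shows that $E(\zeta,\cdot)$ is Lipschitz in $\lambda$ with constant $O(\abs{\Delta\zeta}^2)$, so $\nu \mapsto \tilde\mu(\nu)$ inherits the same constant; consequently $h(\nu) := \nu - \tilde\mu(\nu)$ is a continuous map whose derivative is close to $1$ with $h(\mu) = O(\abs{\Delta\zeta}^3)$, so $h$ vanishes at some $\nu_0$ with $\abs{\nu_0 - \mu} = O(\abs{\Delta\zeta}^3)$, and this $\nu_0$ belongs to $\sigma(S(\zeta)) \cap D$.

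The main obstacle I expect is precisely this reverse direction when $\lambda_{\rm max}(z)$ is infinitely degenerate and $\ran P$ is infinite dimensional. There one cannot simply track individual eigenvalues, and the selection of $\tilde\mu(\nu)$ has to be phrased in terms of Riesz spectral projections over small disks whose continuity in $\nu$ is guaranteed by upper semicontinuity of the spectrum for norm-continuous self-adjoint families \cite[IV-\S3.4]{Kato}. The self-adjointness of $W(\zeta)$ and of $F(\zeta,\lambda)$ for real $\lambda$ is essential here, as it is what upgrades this qualitative continuity to the quantitative Hausdorff bound of order $\abs{\Delta\zeta}^3$.
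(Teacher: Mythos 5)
Your setup — rewriting $F(\zeta,\lambda)$ with $PS(z)P=\lambda_{\rm max}(z)P$ and the vanishing off-diagonal blocks of $S(z)$, expanding the middle resolvent to first order, and obtaining $F(\zeta,\lambda)=W(\zeta)-\lambda P+E(\zeta,\lambda)$ with $\norm{E(\zeta,\lambda)}\leq C\abs{\Delta\zeta}^3$ — matches the paper, and so does your first Hausdorff direction: since $F(\zeta,\nu)$, $W(\zeta)$ and $E(\zeta,\nu)$ are all self-adjoint for real $\nu$, Weyl's inequality is a clean substitute for the paper's Neumann-series factorization of $F(\zeta,\lambda)$. But the reverse direction is where the argument breaks, and you have correctly sensed the trouble. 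Tracking ``the piece of $\sigma(W(\zeta)+E(\zeta,\nu))$ closest to $\mu$'' by a Riesz projection does not yield a well-defined scalar function $\tilde\mu(\nu)$ once $\lambda_{\rm max}(z)$ is degenerate (in particular infinitely degenerate, which the lemma must cover): the Riesz projection captures a cluster or a continuum, not a single eigenvalue, and upper semicontinuity of the spectrum is a qualitative fact that does not upgrade to the quantitative Lipschitz control you need for the fixed-point step. Flagging the obstacle is not the same as resolving it, so as written this direction is a gap.

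The fix, and what the paper does, is to treat the reverse direction completely symmetrically to the forward one — no eigenvalue tracking at all. The Feshbach identity $F(\zeta,\lambda)^{-1}=P(S(\zeta)-\lambda I)^{-1}P$, combined with the self-adjointness of $S(\zeta)$, gives the uniform resolvent bound $\norm{F(\zeta,\lambda)^{-1}}\leq 1/\dist(\lambda,\sigma(S(\zeta)))$. Writing
\begin{equation*}
W(\zeta)-\lambda P=\bigl(I-E(\zeta,\lambda)F(\zeta,\lambda)^{-1}\bigr)F(\zeta,\lambda),
\end{equation*}
one sees that $W(\zeta)-\lambda P$ is invertible as soon as $C\abs{\Delta\zeta}^3/\dist(\lambda,\sigma(S(\zeta)))<1$; that is, any $\lambda\in D$ at distance greater than $C\abs{\Delta\zeta}^3$ from $\sigma(S(\zeta))$ lies in $\rho(W(\zeta))$, which is exactly the reverse Hausdorff bound. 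This makes the two directions genuinely parallel (swap the roles of $F(\zeta,\lambda)$ and $W(\zeta)-\lambda P$, each time using self-adjointness to control the relevant inverse by a distance to spectrum), and it is insensitive to the multiplicity of $\lambda_{\rm max}(z)$. I would suggest you replace your Riesz-projection construction with this factorization argument; the rest of your writeup can stand.
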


\begin{proof}
Let $\zeta \in \rho(A)$. 
In the first part of the proof we will show that if $\lambda\in D$ is located at a distance larger than some constant times $\abs{\Delta\zeta}^3$ from the spectrum of $S(\zeta)$ then it must belong to the resolvent set of $W(\zeta)$.
In other words, no points of $D$ belonging to the spectrum of $W(\zeta)$ can be at a distance larger than $C \abs{\Delta\zeta}^3$ from the spectrum of $S(\zeta)$. 

Indeed, let us assume that $\dist(\lambda,\sigma(S(\zeta)))>0$.  
Due to \eqref{inverseF} and the self-adjointness of $S(\zeta)$ we have 
\begin{equation*}
\norm{F(\zeta,\lambda)^{-1}} \leq \frac{1}{\dist(\lambda,\sigma(S(\zeta)))}.
\end{equation*}
Note that since {$P^{\perp} S(\zeta)P= P^{\perp} \Delta S(\zeta)P$ 
and $P S(\zeta)P^\perp=P \Delta S(\zeta)P^\perp$,}
the norms of the off-diagonal components $P^\perp S(\zeta)P$ and $P S(\zeta)P^\perp$ are of order $\abs{\Delta \zeta}$ by \eqref{eq1}, which combined with \eqref{perp-comp} gives the estimate
\begin{equation*}
\norm{F(\zeta,\lambda)-(W(\zeta)-\lambda P)} \leq C \abs{\Delta \zeta}^3.
\end{equation*}
Hence
\begin{equation*}
W(\zeta)-\lambda P = \bigl( I -[F(\zeta,\lambda)-(W(\zeta)-\lambda P)]F(\zeta,\lambda)^{-1}\bigr)F(\zeta,\lambda)
\end{equation*}
is invertible in $\ran (P)$ if 
\begin{equation*}
\frac{C \abs{\Delta \zeta}^3}{\dist(\lambda,\sigma(S(\zeta)))}<1.
\end{equation*}
Thus if {$\lambda\in D$} and $\dist(\lambda,\sigma(S(\zeta))) > C \abs{\Delta \zeta}^3$, then $\lambda$ is not in the spectrum of $W(\zeta)$ and {the first part of the proof is finished}.

Now we prove the second part, i.e.\ we show that any point $\lambda\in D$ which is located at a distance larger than $C \abs{\Delta \zeta}^3$ from the spectrum of $W(\zeta)$ must  belong to the resolvent set of $S(\zeta)$. 

Indeed, let us assume that $\dist(\lambda,\sigma(W(\zeta)))>0$.
Then
\begin{equation*}
F(\zeta,\lambda)=\bigl(I +[F(\zeta,\lambda)-(W(\zeta)-\lambda P)](W(\zeta)-\lambda P)^{-1}\bigr)(W(\zeta)- \lambda P)
\end{equation*}
is invertible if 
\begin{equation*}
\frac{C \abs{\Delta \zeta}^3}{\dist(\lambda,\sigma(W(\zeta)))}<1.
\end{equation*}
Here we used that $W(\zeta)$ is self-adjoint.
We conclude that $F(\zeta,\lambda)$, and therefore $S(\zeta)-\lambda I$, is invertible for {such $\lambda$'s, hence no element of $D$ which belongs to the spectrum of $S(\zeta)$ can be located at a distance larger than  $C \abs{\Delta \zeta}^3$ from the spectrum of $W(\zeta)$}.
\end{proof}

The following proposition is a direct consequence of Lemma~\ref{lemma1}. We abuse notation slightly and write $\lambda_{\rm max}(\zeta)=\norm{S(\zeta)}$. We emphasize that in the case where $\lambda_{\rm max}(z)$ has infinite multiplicity $\lambda_{\rm max}(\zeta)$ need not be an eigenvalue of $S(\zeta)$.

\begin{proposition}\label{lemma2}
We have that
\begin{equation}\label{hc54}
\bigl\lvert\lambda_{\rm max}(\zeta)-\norm{W(\zeta)}\bigr\rvert \leq C \abs{\Delta \zeta}^3.
\end{equation}
\end{proposition}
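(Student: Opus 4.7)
The plan is to identify $\lambda_{\rm max}(\zeta)$ and $\norm{W(\zeta)}$ with the maxima of two compact subsets of positive reals whose Hausdorff distance has already been controlled by Lemma~\ref{lemma1}, namely $\sigma(S(\zeta))\cap D$ and $\sigma(W(\zeta))$. Once these identifications are in place, the proposition follows from the elementary fact that compact sets $M,N\subset(0,\infty)$ with $d_{\rm H}(M,N)\leq\varepsilon$ satisfy $\abs{\max M-\max N}\leq\varepsilon$.

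First I would verify that $\lambda_{\rm max}(\zeta)=\max(\sigma(S(\zeta))\cap D)$ for $\abs{\Delta\zeta}$ sufficiently small. The Riesz projection argument preceding Lemma~\ref{lemma1} gives $\norm{P(\zeta)-P}<1$, so $P(\zeta)\neq 0$ and $\sigma(S(\zeta))\cap D$ is nonempty; in fact by upper semi-continuity it contains points arbitrarily close to $\lambda_{\rm max}(z)$. The bound \eqref{eq1} implies $\norm{S(\zeta)}\leq\lambda_{\rm max}(z)+C\abs{\Delta\zeta}$, and since the entire circle $\abs{\lambda-\lambda_{\rm max}(z)}=\delta_2$ is contained in $\rho(S(\zeta))$, the part of $\sigma(S(\zeta))$ outside $D$ is confined to $[0,\lambda_{\rm max}(z)-\delta_2]$. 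Hence the supremum of $\sigma(S(\zeta))$ is attained inside $D$.

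Second, I would check that $\norm{W(\zeta)}=\max\sigma(W(\zeta))$ and $\sigma(W(\zeta))\subset D$. The self-adjoint operator $W(\zeta)-\lambda_{\rm max}(z)P$ on $\ran P$ is an $O(\abs{\Delta\zeta})$ perturbation of the zero operator: by \eqref{eq1} we have $\norm{P\Delta S(\zeta)P}\leq C\abs{\Delta\zeta}$, while the last term in the definition of $W(\zeta)$ is a product of two off-diagonal blocks of order $\abs{\Delta\zeta}$ with a bounded middle factor, hence of order $\abs{\Delta\zeta}^2$. Consequently, for $\abs{\Delta\zeta}$ small $\sigma(W(\zeta))$ is contained in a tiny real interval around $\lambda_{\rm max}(z)$, in particular inside $D$ and inside $(0,\infty)$, so the operator norm coincides with the largest spectral value.

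Combining the two identifications, Lemma~\ref{lemma1} produces $\mu\in\sigma(S(\zeta))\cap D$ with $\abs{\norm{W(\zeta)}-\mu}\leq C\abs{\Delta\zeta}^3$ (yielding $\lambda_{\rm max}(\zeta)\geq\norm{W(\zeta)}-C\abs{\Delta\zeta}^3$) and $\nu\in\sigma(W(\zeta))$ with $\abs{\lambda_{\rm max}(\zeta)-\nu}\leq C\abs{\Delta\zeta}^3$ (yielding $\norm{W(\zeta)}\geq\lambda_{\rm max}(\zeta)-C\abs{\Delta\zeta}^3$); the two inequalities together give \eqref{hc54}. The main obstacle I anticipate is the spectral separation in the first step, which is exactly where Assumption~\ref{assum} is essential: without the gap at $\lambda_{\rm max}(z)$ there would be no clean disk $D$ isolating $\lambda_{\rm max}(\zeta)$ from the lower part of $\sigma(S(\zeta))$, and $\norm{S(\zeta)}$ could be realized outside the regime in which Lemma~\ref{lemma1} operates.
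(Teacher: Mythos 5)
Your proof is correct and takes essentially the same route as the paper's: both arguments reduce the claim to the elementary observation that the maxima of two compact subsets of $\IR$ differ by at most their Hausdorff distance, and then invoke Lemma~\ref{lemma1}. The only differences are cosmetic: the paper already established, in the perturbation-theoretic discussion preceding Lemma~\ref{lemma1}, that $\norm{S(\zeta)}=\norm{S(\zeta)P(\zeta)}$ and that both $\lambda_{\rm max}(\zeta)$ and $\norm{W(\zeta)}$ lie in $D$ for $\abs{\Delta\zeta}$ small, so its proof can simply assume WLOG $\lambda_{\rm max}(\zeta)>\norm{W(\zeta)}$ and write one inequality; you re-derive those localization facts explicitly and then prove both one-sided bounds, which is a mildly more verbose but entirely equivalent presentation.
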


\begin{proof} 
If $\abs{\Delta\zeta}$ is small enough, then both $\norm{W(\zeta)}$ and $\lambda_{\rm max}(\zeta)$ belong to $D$. 

Assume without loss of generality that $\lambda_{\rm max}(\zeta) > \norm{W(\zeta)}$.
Since $\norm{W(\zeta)}$ is the element of $\sigma(W(\zeta))$ which is closest to $\lambda_{\rm max}(\zeta)$ we have 
\begin{equation*}
0 < \lambda_{\rm max}(\zeta)-\norm{W(\zeta)} \leq d_{\rm H}\big (\sigma(W(\zeta)){\cap D},\sigma(S(\zeta)){\cap D}\big ) \leq C \abs{\Delta \zeta}^3.
\end{equation*} 
\end{proof}

\begin{proposition}\label{propo9}
There exist $C>0$ and a point $z'\in\rho(A)\setminus\{z\}$ such that $[z,z']\subset\rho(A)$ and $\lambda_{\rm max}(\zeta)-\lambda_{\rm max}(z) \geq C\abs{\zeta-z}^{2}$ for every $\zeta \in [z,z']$. 
\end{proposition}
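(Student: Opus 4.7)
My plan is to use Proposition~\ref{lemma2} to reduce the claim to a lower bound on $\norm{W(\zeta)}$: it suffices to exhibit a direction and a small segment along which $\norm{W(\zeta)} \geq \lambda_{\rm max}(z) + C\abs{\Delta\zeta}^2$, since the cubic Hausdorff error between $\lambda_{\rm max}(\zeta)$ and $\norm{W(\zeta)}$ is negligible at second order. The crucial structural observation is that the third (Feshbach-type) summand in $W(\zeta)$ has the form $L^{\ast}ML$ with $L=P^{\perp}\Delta S(\zeta)P$ and $M=(\lambda_{\rm max}(z)P^{\perp}-P^{\perp}S(z)P^{\perp})^{-1}$; since $\sigma(P^{\perp}S(z)P^{\perp})\subseteq[0,a(z)]$ lies strictly below $\lambda_{\rm max}(z)$, the operator $M$ is positive, hence $L^{\ast}ML\geq 0$ on $\ran P$. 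Consequently $W(\zeta)$ is self-adjoint and, for $\abs{\Delta\zeta}$ sufficiently small, positive on $\ran P$, so $\norm{W(\zeta)}=\sup\{\ip{\psi}{W(\zeta)\psi} : \psi\in\ran P,\ \norm{\psi}=1\}$, and discarding the non-negative Feshbach term leaves
\begin{equation*}
\ip{\psi}{W(\zeta)\psi}\geq \lambda_{\rm max}(z)+\ip{\psi}{\Delta S(\zeta)\psi}.
\end{equation*}

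Next I would expand $\ip{\psi}{\Delta S(\zeta)\psi}$ to second order in $\Delta\zeta$. Inserting the Neumann series $R_A(\zeta)=\sum_{n\geq 0}\Delta\zeta^n R_A(z)^{n+1}$ into $S(\zeta)=R_A(\zeta)^{\ast}R_A(\zeta)$ and collapsing factors with the eigenvalue identity $S(z)\psi=\lambda_{\rm max}(z)\psi$ for $\psi\in\ran P$ produces
\begin{equation*}
\ip{\psi}{\Delta S(\zeta)\psi}=2\lambda_{\rm max}(z)\re\bigl(\Delta\zeta\ip{\psi}{R_A(z)\psi}\bigr)+\abs{\Delta\zeta}^{2}\norm{R_A(z)^{2}\psi}^{2}+2\lambda_{\rm max}(z)\re\bigl(\Delta\zeta^{2}\ip{\psi}{R_A(z)^{2}\psi}\bigr)+O(\abs{\Delta\zeta}^{3}).
\end{equation*}
The task is then to choose a unit vector $\psi\in\ran P$ together with a direction for $\Delta\zeta$ so that the right-hand side is bounded below by $C\abs{\Delta\zeta}^{2}$ uniformly on a small segment emanating from $z$.

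I would then split into two cases. \textbf{Case A:} some unit eigenvector $\psi\in\ran P$ has $\ip{\psi}{R_A(z)\psi}\neq 0$. Take $\Delta\zeta$ collinear with $\overline{\ip{\psi}{R_A(z)\psi}}$; then $\Delta\zeta\ip{\psi}{R_A(z)\psi}$ is a positive real, the first-order term equals $2\lambda_{\rm max}(z)\abs{\Delta\zeta}\abs{\ip{\psi}{R_A(z)\psi}}$ and it dominates the quadratic remainder on a short segment, giving \emph{linear} (hence certainly quadratic) growth. \textbf{Case B:} $\ip{\psi}{R_A(z)\psi}=0$ for every unit $\psi\in\ran P$, so the first-order term vanishes and the lower bound must come from the second-order terms. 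The key observation is that $R_A(z)$ is invertible (its inverse is $A-zI$), hence $R_A(z)^{2}\psi\neq 0$ for any nonzero $\psi$, so $\norm{R_A(z)^{2}\psi}^{2}>0$. Picking $\arg(\Delta\zeta)$ so that $\Delta\zeta^{2}\ip{\psi}{R_A(z)^{2}\psi}$ has non-negative real part renders the sign-indefinite cross term non-negative, leaving the strictly positive contribution $\abs{\Delta\zeta}^{2}\norm{R_A(z)^{2}\psi}^{2}$ as the dominant term once the cubic remainder is absorbed. Case B is the main obstacle: it is only here that one genuinely needs the positivity of the Feshbach correction, the second-order resolvent expansion, the invertibility of $R_A(z)$, and a judicious choice of direction simultaneously.
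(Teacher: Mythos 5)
Your proof is correct and follows essentially the same strategy as the paper: reduce via Proposition~\ref{lemma2} and the positivity of the Schur-complement correction to a second-order expansion of $\ip{\psi}{\Delta S(\zeta)\psi}$, split on whether $\ip{\psi}{R_A(z)\psi}$ vanishes for all $\psi\in\ran P$, and pick the direction so the indefinite cross term is non-negative. The only cosmetic difference is that the paper routes through an explicit second-order truncation $\widetilde{W}(\zeta)$ of $W(\zeta)$ and compares their spectra before estimating the quadratic form, whereas you bound the quadratic form of $W(\zeta)$ directly.
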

\begin{remark}
Recall that $\norm{R_A(z)}^2 = \lambda_{\rm max}(z)$ and $\norm{R_A(\zeta)}^2 = \lambda_{\rm max}(\zeta)$. 
If {$\abs{z'-z}$} is small enough we have $\norm{R_A(\zeta)} \leq 2 \norm{R_A(z)}$, 
$\zeta\in[z,z']$.
This implies for $\zeta\in[z,z']$ 
\begin{equation*}
\norm{R_A(\zeta)} - \norm{R_A(z)} = \frac{\norm{R_A(\zeta)}^2 
-\norm{R_A(z)}^2}{\norm{R_A(\zeta)} + \norm{R_A(z)}} \geq 
\frac{\lambda_{\rm max}(\zeta)-\lambda_{\rm max}(z)}{3 \norm{R_A(z)}} \geq C \abs{z-\zeta}^2.
\end{equation*}
Thus this proposition implies Theorem~\ref{thm-horia}. 
\end{remark}
\begin{proof}
Use the Taylor expansion to get
\begin{equation*}
R_A(\zeta) = R_A(z) + (\Delta\zeta)R_A(z)^2 + (\Delta\zeta)^2R_A(z)^3 + \mathcal{O}(\abs{\Delta \zeta}^3).
\end{equation*}
Let $\Delta S(\zeta)$ and $W(\zeta)$ be defined as in Lemma~\ref{lemma1}.
Then we get
\begin{align}\label{hc10}
\Delta S(\zeta) &= (\Delta \zeta) S(z) R_A(z) + (\overline{\Delta\zeta}) R_A(z)^* S(z)\nonumber \\
 &\quad +(\Delta\zeta)^2 S(z) R_A(z)^2 + (\overline{\Delta\zeta})^2 (R_A(z)^*)^2 S(z)\nonumber \\
 &\quad +\abs{\Delta \zeta}^2 R_A(z)^* S(z) R_A(z) + \mathcal{O}(\abs{\Delta \zeta}^3).
\end{align}
Next we introduce the operator
\begin{align}\label{hc12}
\widetilde{W}(\zeta) &= \lambda_{\rm max}(z) P + \lambda_{\rm max}(z) (\Delta\zeta) P R_A(z) P + \lambda_{\rm max}(z) (\overline{\Delta\zeta}) P R_A(z)^* P \nonumber \\
&\phantom{=.} + \lambda_{\rm max}(z) (\Delta\zeta)^2 PR_A(z)^2 P + \lambda_{\rm max}(z) (\overline{\Delta\zeta})^2 P (R_A(z)^*)^2 P\nonumber \\
&\phantom{=.} + \abs{\Delta\zeta}^2 P R_A(z)^* S(z) R_A(z) P \nonumber \\
&\phantom{=.} + P \Delta S(\zeta) P^\perp \bigl(\lambda_{\rm max}(z) P^\perp - P^\perp S(z) P^\perp \bigr)^{-1} P^\perp \Delta S(\zeta) P.
\end{align}
The estimate 
\begin{equation*}
\norm{W(\zeta)-\widetilde{W}(\zeta)} \leq C \abs{\Delta\zeta}^3
\end{equation*}
yields that the Hausdorff distance between the spectra of $W(\zeta)$ and $\widetilde{W}(\zeta)$ is of order $\abs{\Delta\zeta}^3$.
Using Proposition~\ref{lemma2} we conclude that $\lambda_{\rm max}(\zeta)$ is at a distance of order $\abs{\Delta \zeta}^3$ from $\norm{\widetilde{W}(\zeta)}$. 

The main idea is now to find $z' \in \rho(A)$ such that for every $\zeta \in [z,z']$,
\begin{equation*}
\norm{\widetilde{W}(\zeta)} \geq \lambda_{\rm max}(z) + C_1 \abs{\Delta \zeta} + C_2 \abs{\Delta \zeta}^2
\end{equation*}
where $C_1, C_2\geq0$ with $\max\{C_1,C_2\}>0$.
This will prove that for $\abs{z-z'}$ small enough, either $(\lambda_{\rm max}(\zeta)-\lambda_{\rm max}(z))/\abs{\Delta\zeta}$ or $(\lambda_{\rm max}(\zeta)-\lambda_{\rm max}(z))/\abs{\Delta\zeta}^{2}$ is bounded from below by a positive constant on $[z,z']$.%, provided that $\abs{z-z'}$ is small enough.

First assume that there exists $\psi=P\psi$ with $\norm{\psi}=1$, such that 
\begin{equation*}
\ip{\psi}{R_A(z) \psi} = \eta \e^{\I\varphi},\quad \eta>0,\quad \varphi\in [0,2\pi[.
\end{equation*}
Now choose $\zeta\in [z,z']$ with $z'=z+r\e^{-\I\varphi}$.
Then 
\begin{equation}\label{order1}
\norm{\widetilde{W}(\zeta)} \geq \ip{\psi}{\widetilde{W}(\zeta)\psi} \geq \lambda_{\rm max}(z) + 2\lambda_{\rm max}(z) \abs{\Delta\zeta} \eta + \mathcal{O}(\abs{\Delta\zeta}^2).
\end{equation}
In this case we can choose $C_1 = 2 \lambda_{\rm max}(z) \eta$, $C_2=0$, and we are done. 

Next assume that $\ip{\psi}{R_A(z)\psi} =0$ for all $\psi=P\psi$ of norm one.
Using 
\begin{equation*}
\ip{\Delta S(\zeta)\psi}{P^\perp \bigl(\lambda_{\rm max}(z) P^\perp - P^\perp S(z)P^\perp \bigr)^{-1} P^\perp \Delta S(\zeta)\psi} \geq 0
\end{equation*}
in \eqref{hc12} we obtain
\begin{align}\label{hc13}
\norm{\widetilde{W}(\zeta)} &\geq \ip{\psi}{\widetilde{W}(\zeta)\psi}\nonumber \\
&\geq \lambda_{\rm max}(z)+2\lambda_{\rm max}(z) \re \bigl( (\Delta\zeta)^2 \ip{\psi}{R_A(z)^2 \psi} \bigr)\nonumber \\
&\quad + \abs{\Delta \zeta}^2 \ip{R_A(z) \psi}{S(z) R_A(z)\psi}.
\end{align}

Let $C_2=\ip{R_A(z) \psi}{S(z)R_A(z)\psi}$. Since $S(z)$ is strictly positive and $R_A(z)\psi\neq0$, we get $C_2>0$.

This result inserted into \eqref{hc13} gives
\begin{equation*} 
\norm{\widetilde{W}(\zeta)} \geq \lambda_{\rm max}(z)+2\lambda_{\rm max}(z) \re \bigl((\Delta\zeta)^2\ip{\psi}{R_A(z)^2\psi} \bigr) + C_2\abs{\Delta \zeta}^2.
\end{equation*}
Write 
\begin{equation*}
\ip{\psi}{R_A(z)^2 \psi} = \eta \e^{\I\varphi}, \quad \eta\geq 0,\quad \varphi\in [0,2\pi[.
\end{equation*}
Then choosing $\zeta\in[z,z']$ with $z'=z+r\e^{-\I\varphi/2}$ we get
$\re \bigl((\Delta\zeta)^2\ip{\psi}{R_A(z)^2\psi} \bigr)\geq0$ and then
\begin{equation}\label{estimate}
\norm{\widetilde{W}(\zeta)} \geq \lambda_{\rm max}(z) + C_2\abs{\Delta\zeta}^2.
\end{equation}

It remains to consider the case when $\cH$ is finite dimensional and $\lambda_{\max}(z)$ is the only eigenvalue of $S(z)$. In this case $P=I$ and $P^{\perp}=0$. If one omits all terms involving $P^{\perp}$ in the proof above then it is valid also in this case.

This concludes the proof of Theorem~\ref{thm-horia}. 
\end{proof}

Corollary~\ref{cor13} is an immediate consequence of the proof above. If the conditions in Corollary~\ref{cor12} are satisfied
and we take $\psi$ satisfying (ii), then we get the estimate \eqref{estimate} for any choice of direction, such that $z$ is a local minimum point of the resolvent norm.

\begin{remark}\label{normal-remark}
We note that in the case $A$ normal with compact resolvent there is a simple direct proof of the estimate~\eqref{cor1} in Corollary~\ref{cor13}. Let $z\in\rho(A)$. Since $\sigma(A)$ is discrete, there exists $\lambda\in\sigma(A)$ such that $\dist(z,\sigma(A))=\abs{\lambda-z}$ and therefore $\{z+t(\lambda-z) \,|\, t\in[0,1[\,\}\subset\rho(A)$. Take $z'\neq z$ in this set, sufficiently close to $z$. Since $A$ is normal, we have
\begin{equation}
\norm{R_A(z)}=\frac{1}{\dist(z,\sigma(A))}.
\end{equation}
Thus the result follows by a simple geometrical argument.

We can also prove the estimate~\eqref{cor1} for $A$ normal with compact resolvent by verifying the condition (i) in Corollary~\ref{cor13}. We have that $R_A(z)$ is normal for all $z\in\rho(A)$. Fix $z\in\rho(A)$. Assume that $\ip{\psi}{R_A(z)\psi}=0$ for all $\psi\in\ran P$. By polarization we get that $\ip{\psi'}{R_A(z)\psi}=0$ for all $\psi',\psi\in\ran P$. Since $R_A(z)$ and $R_A(z)^{\ast}$ commute we get that $R_A(z)\colon\ran P\to\ran P$ is an isomorphism. Take $\psi'=R_A(z)\psi$. Then it follows from $\ip{\psi'}{R_A(z)\psi}=0$ that $\psi=0$, a contradiction. Thus there exists $\psi\in\ran P$ with 
$\ip{\psi}{R_A(z)\psi}\neq0$.
\end{remark}

\section{Proof of Theorem~\ref{thm-henrik}}%
\label{sec: proof henrik}
%%%%%%

Let
\begin{equation*}
A = \begin{bmatrix}
	a & b \\ 
        c & d
    \end{bmatrix} \in \IC^{2\times 2}.
\end{equation*}
Let $z\in\rho(A)$ and set $T(z) = (A-zI)^*(A-zI)$. Define
\begin{align*}
	w(z) &= \tr(T(z)) = \sum_{i,j=1}^2\abs{(A-zI)_{ij}}^2 = \abs{a-z}^2+\abs{d-z}^2+\abs{b}^2 + \abs{c}^2, \\
	h(z) &= \det(T(z)) = \abs{\det(A-zI)}^2 = \abs{z^2-\tr(A)z+\det(A)}^2.
\end{align*}
The resolvent norm $\norm{R_A(z)}$ equals the reciprocal to the smallest singular value $s(z)$ of $A-zI$, i.e.\ $s(z)^2$ is the smallest eigenvalue of the positive definite matrix $T(z)$. 
This leads to the expression
\begin{equation*}
\norm{R_A(z)}^2 = \frac{1}{s(z)^2} = \frac{2}{w(z)-\sqrt{w(z)^2-4h(z)}}.
\end{equation*}
The average of the eigenvalues of $A$ is $\tr(A)/2$, thus
\begin{equation*}
A_1 = A - \frac{\tr(A)}{2}I 
\end{equation*}
has eigenvalues $\pm \lambda$ for some $\lambda\in \IC$. 
We consider the two cases (i) $\lambda = 0$ and (ii) $\lambda \neq 0$ separately.

\paragraph{Case (i)} 
We start with the case $\lambda=0$. Then $\tr(A_1) = \det(A_1) = 0$ which implies that either 
\begin{equation}
A_1 = \begin{bmatrix}
       0 & 0 \\ c & 0
      \end{bmatrix} 
\qquad\text{or}\qquad 
A_1 = \begin{bmatrix}
	a & b \\ -\frac{a^2}{b} & -a
      \end{bmatrix} 
\label{eq:type1}
\end{equation}
with $a,b,c\in\IC$ and $b \neq 0$.
The corresponding expressions for $w$ and $h$ for the matrices in \eqref{eq:type1} are $w(z) = 2t+k$ and $h(z) = t^2$ where $t = \abs{z}^2$ and $k = \abs{c}^2$, respectively $k = 2\abs{a}^2+\abs{b}^2+\frac{\abs{a}^4}{\abs{b}^2}$. 
Thus in both cases $\norm{R_{A_1}(\cdot)}$ is a radial function
\begin{equation*}
\norm{R_{A_1}(z)}^2 = \frac{2}{2t+k-(4kt+k^2)^{1/2}}.
\end{equation*}
For $k = 0$ we have $\norm{R_{A_1}(z)} = \abs{z}^{-1}$, so we may assume $k>0$. The derivative is
\begin{equation*}
\frac{d}{dt}\norm{R_{A_1}(z)}^2 = \frac{-4+4k(4kt+k^2)^{-1/2}}{(2t+k-(4kt+k^2)^{1/2})^2}.
\end{equation*}
As the eigenvalues of $A_1$ are 0 then $t>0$ and $k>0$, i.e.\ we have $\frac{d}{dt}\norm{R_{A_1}}^2 < 0$.
Since the resolvent for $A$ is given by
\begin{equation}
R_A(z) = R_{A_1}(z-\tfrac{1}{2}\tr(A)) \label{eq:resnormtype1}
\end{equation}
we conclude that $\norm{R_A(\cdot)}$ is a radial function with center at its eigenvalue $\tr(A)/2$ and is strictly decreasing away from $\tr(A)/2$.

\paragraph{Case (ii)} Assume $\lambda\neq 0$. As the eigenvalues of $A_1$ are distinct
\begin{equation*}
A_2 = \frac{1}{\lambda}A_1
\end{equation*}
has eigenvalues $\pm 1$. Thus $\tr(A_2) = 0$ and $\det(A_2) = -1$ whence either  
\begin{equation}
A_2 = \pm\begin{bmatrix}
	1 & 0 \\ c & -1
         \end{bmatrix}
\qquad\text{or}\qquad 
A_2 = \begin{bmatrix}
	a & b \\ \frac{1-a^2}{b} & -a
      \end{bmatrix} 
\label{eq:type2}
\end{equation}
with $a,b,c\in\IC$ and $b \neq 0$.
The corresponding expressions for $w$ and $h$ are $w(z) = 2\abs{z}^2 + k$ and $h(z) = \abs{z}^4+1-z^2-\overline{z}^2$ where $k = 2+\abs{c}^2$ and $k = 2\abs{a}^2+\abs{b}^2+\frac{\abs{1-a^2}^2}{\abs{b}^2}$, respectively. 
Note that since $w^2-4h\geq 0$ a priori, $k\geq 2$. In fact $k = 2$ if and only if $A_2$ is self-adjoint, {which is the case if and only if $A$ is normal.} 
For $k = 2$ we have for $z = x_1 + \I x_2$ with $x_1,x_2\in\IR$,
\begin{equation*}
\norm{R_{A_2}(z)}^2 = \frac{1}{(1-\abs{x_1})^2+x_2^2}.
\end{equation*}
This function is not differentiable at $x_1=0$, but clearly it increases away from the imaginary axis for each fixed $x_2$ between $z = \pm 1+\I x_2$, and decreases away from the origin on the imaginary axis. 
It is easily checked that there are no critical points for $\abs{x_1}>0$ as $\pm 1\in\sigma(A_2)$. 
In particular, $z=0$ is a saddle point of $\norm{R_{A_2}(\cdot)}$ and there are no local extrema.

Now assume $k>2$.
Then
\begin{align}
\norm{R_{A_2}(z)}^2 &= \frac{2}{2\abs{z}^2+k-((k+2)(k-2)+4(k\abs{z}^2+z^2+\overline{z}^2))^{1/2}} \notag\\
	       &= \frac{2}{2x_1^2+2x_2^2+k-((k+2)(k-2)+4(k+2)x_1^2+4(k-2)x_2^2)^{1/2}}. \label{eq:resnormA2}
\end{align}
It is evident that $\norm{R_{A_2}(\cdot)}$ is symmetric with respect to the origin. 

By a straightforward direct calculation
\begin{align*}
\frac{\partial}{\partial x_1}\norm{R_{A_2}(z)}^2 &= 2x_1\norm{R_{A_2}(z)}^4 ((k+2)(w(z)^2-4h(z))^{-1/2}-1), \\
\frac{\partial}{\partial x_2}\norm{R_{A_2}(z)}^2&= 2x_2\norm{R_{A_2}(z)}^4 ((k-2)(w(z)^2-4h(z))^{-1/2}-1).
\end{align*}
Note that as $k>2$ then $w^2-4h\geq (k+2)(k-2) > (k-2)^2$.
Thus we must have $x_2 = 0$ at a critical point. 
If $x_2 = 0$, then $w^2-4h = (k+2)^2$ if and only if $x_1 = \pm 1$.
However, as $z = \pm 1 \in \sigma(A_2)$ the only critical point of $\norm{R_{A_2}(\cdot)}$ is at $z = 0$.

From \eqref{eq:resnormA2} and writing $z = \sqrt{t}\e^{\I\theta}$, we get
\begin{equation*}
	\norm{R_{A_2}(z)}^2 = \frac{2}{2t+k-((k+2)(k-2)+4t(k+2\cos(2\theta)))^{1/2}}.
\end{equation*}
From this we immediately obtain
\begin{align*}
\frac{d}{dt}\norm{R_{A_2}(z)}^2 &= \norm{R_{A_2}(z)}^4 ((k+2\cos(2\theta))(w(z)^2-4h(z))^{-1/2}-1) \\
	                   &= \frac{4\norm{R_{A_2}(z)}^4 (k+2\cos(2\theta))
(w(z)^2-4h(z))^{-1/2}}{k+2\cos(2\theta)+(w(z)^2-4h(z))^{1/2}}(g(k,\theta)-t)
\end{align*}
where
\begin{equation*}
g(k,\theta) = \frac{1}{4}(k+2\cos(2\theta)) - \frac{(k+2)(k-2)}{4(k+2\cos(2\theta))}.
\end{equation*}
For $k>2$ all the terms in $\frac{d}{dt}\norm{R_{A_2}(z)}^2$ are positive except $g(k,\theta)-t$. 
As a consequence, $g(k,\theta)$ determines the sign of the derivative
\begin{equation}
\begin{cases}
\dfrac{d}{dt}\norm{R_{A_2}(z)}^2 > 0, & 0<t<g(k,\theta), \\[10pt]
\dfrac{d}{dt}\norm{R_{A_2}(z)}^2 < 0, & t > \max\{g(k,\theta),0\}.
\end{cases} \label{eq:deriv}
\end{equation}
Since $k > 2$ we have in particular $\frac{d}{dt}\norm{R_{A_2}(x_1)}^2>0$ for $0< t < 1$ and $\frac{d}{dt}\norm{R_{A_2}(\I x_2)}^2<0$ for $t>0$. 
Thus $\norm{R_{A_2}(\cdot)}$ is increasing away from the origin on the real axis between $z=\pm 1$ and decreasing away from the origin on the imaginary axis.

The resolvent for $A$ is now given by
\begin{equation}
{R_A(z) = \lambda^{-1}R_{A_2}\bigl( \lambda^{-1}(z-\tfrac{1}{2}\tr(A))\bigr).} \label{eq:resnormtype2}
\end{equation}
This implies that for $k>2$, $\norm{R_A(\cdot)}$ has exactly one critical point at $\tr(A)/2$ which is a saddle point, and is furthermore symmetric with respect to the point $\tr(A)/2$. 
{For $k=2$ (when $A$ is normal) we also have a line of critical points where $\norm{R_A(\cdot)}$ is non-differentiable, as stated in the theorem, none of which are local extrema.}

%%%%%% NEW SECTION %%

\section{Proof of Theorem~\ref{path}}%
\label{sec: connected components}

Assume that $A\in \IC^{N\times N}$ has the distinct eigenvalues $\lambda_1,\ldots, \lambda_J$, $1\leq J\leq N$. Let $\psi_j$ denote a normalized eigenvector corresponding to the eigenvalue $\lambda_j$, $j=1,\ldots,J$.
Then we have for any $z \in \rho(A)$ and $j=1,\ldots,J$,
$\norm{R_A(z)} \geq \norm{R_A(z) \psi_j} = 1/{\abs{\lambda_j-z}}$.
Thus
\begin{equation*}
\norm{R_A(z)} \geq \frac{1}{\dist(z,\sigma(A))}.
\end{equation*}
For $\epsilon>0$ this estimate implies
$B_{\varepsilon/2}(\lambda_j) \subset \sigma_\varepsilon(A)$ for each $j=1,\ldots,J$.

We have that $\sigma_{\varepsilon}(A)$ is bounded, 
since $\norm{R_A(z)}\to0$ as $\abs{z}\to \infty$.
Thus $\overline{\sigma_\varepsilon(A)}$ is compact.
If $z \in \sigma(A)$ then $\norm{R_A(z)}$ is interpreted to be $\infty$. Actually we have $\overline{\sigma_\varepsilon(A)}=\{z\in\IC \,|\, \norm{R_A(z)} \geq 1/{\varepsilon}\}$; see e.g.\ \cite{CCH,DS,H,S} for this result and for results on $\{z\in\IC \,|\, \norm{R_A(z)} = 1/{\varepsilon}\}$.

Fix $\varepsilon>0$. Define $f\colon \overline{\sigma_\varepsilon(A)} \to [\varepsilon^{-1},\infty]$ by $f(z) = \norm{R_A(z)}$.
Consider the compact set 
\begin{equation*}
K = \overline{\sigma_\varepsilon(A)} \setminus \bigl(\bigcup_{j=1}^J B_{\varepsilon/2}(\lambda_j) \bigr).
\end{equation*}
$f$ is bounded on $K$ and $\dist(z,\sigma(A)) \geq \varepsilon/2$ for any $z \in K$. 

Next fix $z \in \sigma_{\varepsilon}(A)$.
The remainder of the proof constructs a sequence $(x_n)_{n\in\IN}\subset \sigma_\varepsilon(A)$ which starts at the point $x_1=z$ and at a finite index $m$ enters $B_{\varepsilon/2}(\lambda_j)$ for some $j=1,\dots,J$.
Moreover, each line segment $[x_n,x_{n+1}]$, $1 \leq n \leq m-1$, lies completely inside $\sigma_\varepsilon(A)$.

{If $z\in\sigma_\varepsilon(A)\setminus K$ we are done, as $z$ can be connected to an eigenvalue by a single line segment. Hence we may assume $z$ belongs to the interior of $K$.} Set $\delta = (f(z)-\varepsilon^{-1})/2>0$ and for any $x\in \overline{\sigma_{\varepsilon}(A)}\setminus\sigma(A)$ define $r_x$ to be the supremum over all $r>0$ such that there exists a $y$ with $\abs{x-y}=r$, such that {$f(x)<f(y)$} and $f(x)-\delta\leq f(\zeta)$ for all $\zeta\in[x,y]$.

Note that this implies $y \in \sigma_{\varepsilon}(A)$, but not necessarily $[x,y] \subset \overline{\sigma_{\varepsilon}(A)}$.
By Theorem~\ref{thm-horia} the supremum is taken over a non-empty set,
such that $r_x > 0$.
As $\overline{\sigma_{\varepsilon}(A)}$ is bounded the point $y$ cannot be located arbitrarily far away from $x$ so $r_x$ is finite.

For each $x\in \overline{\sigma_{\varepsilon}(A)}\setminus\sigma(A)$ there exists $y_x \in \sigma_{\varepsilon}(A)$ such that $r_x/2 < \abs{y_x - x} \leq r_x$, $f(x)<f(y_x)$, and $f(x) - \delta \leq f(\zeta)$ for all $\zeta \in [x,y_x]$. 

Define a sequence $(x_n)_{n \in \IN} \subset \sigma_\varepsilon(A)$ by $x_1=z$ and $x_{n+1}=y_{x_n}$, $n\geq 1$, which implies 
\begin{equation*}
\frac{1}{\varepsilon} < f(z)=f(x_1)<f(x_2)<\dots
\end{equation*}
Next we show that $x_n$ must escape from $K$ for some $n$.
Assume that this is not true such that $x_n\in K$ for all $n$.
Then the increasing sequence given by $f(x_n)$ is bounded. Thus there exists $M>0$ such that $f(x_n)<M$ for all $n$ and $\lim_{n\to\infty}f(x_n)=M$.
As $K$ is compact we may assume (by passing to a subsequence) that $x_n$ converges to $a\in K$. 

From 
Theorem~\ref{thm-horia} we know that there exists some $a'$ located at a positive distance from $a$ where $f(a)<f(a')$ and $f(a)<f(\zeta)$ for all $\zeta$ in the open line segment $]a,a'[$. 

Since $f$ is uniformly continuous on $K$, we have for $n$ larger than some $m$, $f(x_n)<f(a')$ and $f(x_n)-\delta\leq f(\zeta_n)$ for every $\zeta_n\in[x_n,a']$.
Furthermore, we may assume that $\abs{x_{n+1}-x_n} \leq \abs{a-a'}/10$ and $\abs{a-x_{n+1}}\leq \abs{a-a'}/10$.
Then $a'$ fulfils the criteria for $y$ which were used to define $r_{x_n}$.  If $n$ is large enough we have 
\begin{equation*}
\abs{a'-x_n} \geq \abs{a'-a}-\abs{a-x_{n+1}} - \abs{x_{n+1}-x_n} > 2\abs{x_{n+1}-x_n} = 2 \abs{y_{x_n}-x_n} > r_{x_n},
\end{equation*}
contradicting the definition of $r_{x_n}$. 
Thus the $x_n$ must lie outside $K$ if $n \geq m$ for some $m$, and they must lie in $\bigcup_{j=1}^J B_{\varepsilon/2}(\lambda_j)$.

It remains  to show that the polygonal path connecting $z$, via the points $z=x_1,\dots,x_m$, to $\bigcup_{j=1}^J B_{\varepsilon/2}(\lambda_j)$ is contained in $\sigma_{\varepsilon}(A)$. From the definition of $\delta$ and the construction of $x_1,\dots,x_m$, every $\zeta$ on the polygonal path satisfies
\begin{equation*} 
f(\zeta)\geq f(z)-\delta= \frac{f(z)+\varepsilon^{-1}}{2} > \frac{1}{\varepsilon}.
\end{equation*}
Thus the path lies in $\sigma_\varepsilon(A)$, and can with one additional line segment be connected to one of the eigenvalues of $A$. 

\begin{remark}
The connected components of $\sigma_{\varepsilon}(A)$ are not necessarily simply connected. In the case of $A$ normal an example is constructed as follows.
Let $N\geq2$ and let $A\in\IC^{N\times N}$ be the diagonal matrix with $A_{jj}=j+\I(-1)^j\sqrt{3}/2$, $j=1,2,\ldots,N$. Elementary geometry shows that for $1<\varepsilon<2/\sqrt{3}$ the set $\sigma_{\varepsilon}(A)$ is $N-1$ connected.
In section~\ref{sec: local minima} we give an example with a non-normal matrix, see Figure~\ref{fig:pseudo} in subsection~\ref{sec: block diagonals}.
\end{remark}

%%%%%% NEW SECTION %%%%%%%%%%%%%%%%%%%%%%%%%%%%%%%%%%%%%%%%
\section{Examples}%
\label{sec: local minima}
In this section we give a number of examples. 
In subsection~\ref{ex-assum} we give an example of an operator with an eigenvalue $\lambda_{\rm max}(z)$ that has infinite multiplicity. In this example Assumption~\ref{assum} holds for some but not all $z\in\rho(A)$. 
In subsections~\ref{sec: block diagonals}  and~\ref{sec: general example} we give examples of non-normal matrices such that the resolvent norm has a local minimum at the origin. In subsection~\ref{infinite} we give an example of a non-normal operator in an infinite dimensional Hilbert space which satisfies Assumption~\ref{assum} at the origin and such that the resolvent norm has a local minimum at the origin. 

%%%%%% NEW SUBSECTION %%
\subsection{On Assumption \ref{assum}}
\label{ex-assum}
We give examples showing that Assumption~\ref{assum} may only be satisfied in a proper subset of the resolvent set.

For $N\in\IN$ let $\cH_N=L^2([0,1])\oplus \IC^N$ and for $N=\infty$ let
$\cH_{\infty}=L^2([0,1])\oplus\ell^2(\IN)$. Let $A_1$ be multiplication by $x$ on $L^2([0,1])$, such that $\sigma(A_1)=\sigma_{\rm ac}(A_1)=[0,1]$, and let $A_2=2I$ on $\IC^N$ or $\ell^2(\IN)$. Let $A=A_1\oplus A_2$ on $\cH_N$, $N\in\IN\cup\{\infty\}$. Then $A$ is a bounded self-adjoint operator with $\sigma(A)=[0,1]\cup\{2\}$. Using $\norm{S(z)}=\norm{R_A(z)}^2=1/\dist(z,\sigma(A))^2$ we see that Assumption~\ref{assum} is satisfied in
$\cG=\{z\in\IC\,|\,\re z>\frac32, z\neq2\}$. It is not satisfied in $\rho(A)\setminus\cG$. For $z\in\cG$ we have $\lambda_{\rm max}(z)=\abs{2-z}^{-2}$ with multiplicity $N$.

%%%%%% NEW SUBSECTION %%
\subsection{Local minimum for block diagonals with $\mathbf{2\times 2}$-blocks}%
\label{sec: block diagonals}

Let $A \in \IC^{2\times 2}$ have distinct eigenvalues $\frac{\tr(A)}{2}\pm r_A \e^{\I \phi}$ for $r_A>0$. 
By \eqref{eq:resnormA2} and \eqref{eq:resnormtype2} we have
\begin{equation*}
\norm{R_{A}(\tr(A)/2)}^2 = \frac{2{r_A}^{-2}}{k_A-\sqrt{{k_A}^2-4}} = \frac{1}{r_A^2\gamma_A}
\end{equation*}
for $\gamma_A = (k_A-\sqrt{{k_A}^2-4})/2 \in~]0,1]$ and some $k_A\geq 2$ as given in the proof of Theorem~\ref{thm-henrik}(ii) (any real value $\geq 2$ may be obtained, depending on the structure of the matrix). 
Note that $k_A$ is independent of the eigenvalues of $A$.

\begin{lemma} \label{lemma3}
Let $A\in \IC^{2\times 2}$ with distinct eigenvalues $\frac{\tr(A)}{2}\pm r_A \e^{\I\phi}$ for $r_A>0$ and define
\begin{equation*}
\theta_A = \frac{\pi}{2} - \frac{1}{2}\arccos(\gamma_A) \in~]\tfrac{\pi}{4},\tfrac{\pi}{2}].
\end{equation*}
The angles for which $\norm{R_A(\cdot)}$ is increasing away from $\tr(A)/2$ {\rm(}in a small neighborhood{\rm)} are precisely the two arcs $]\phi-\theta_A,\phi+\theta_A[\;\cup\;]\phi+\pi-\theta_A,\phi+\pi+\theta_A[$.
\end{lemma}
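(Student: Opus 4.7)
The approach is to reduce to the normalized matrix $A_2$ from the proof of Theorem~\ref{thm-henrik}(ii) and then read off when the monotonicity sign furnished by~\eqref{eq:deriv} is positive near $t=0$.

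First I would apply~\eqref{eq:resnormtype2} with $z = \tr(A)/2 + s\e^{\I\psi}$ and $\lambda = r_A\e^{\I\phi}$, which gives
\begin{equation*}
\norm{R_A(\tr(A)/2+s\e^{\I\psi})} = r_A^{-1}\norm{R_{A_2}((s/r_A)\e^{\I(\psi-\phi)})}.
\end{equation*}
Since the positive rescalings $r_A^{-1}$ and $s \mapsto s/r_A$ preserve monotonicity in $s$, $\norm{R_A(\cdot)}$ is increasing away from $\tr(A)/2$ in direction $\psi$ (in a sufficiently small neighborhood) if and only if $\norm{R_{A_2}(\cdot)}$ is increasing away from $0$ in direction $\theta := \psi - \phi$. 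By~\eqref{eq:deriv} the latter happens precisely when $g(k_A,\theta) > 0$, so the task reduces to determining the $\theta \in [0,2\pi)$ for which $g(k_A,\theta) > 0$.

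Second I would analyze the inequality $g(k_A,\theta) > 0$ algebraically. Setting $c := \cos(2\theta) \in [-1,1]$ and noting that $k_A + 2c \geq k_A - 2 \geq 0$, clearing the denominator in the definition of $g$ yields the equivalent condition $c^2 + k_A c + 1 > 0$. The quadratic has two negative real roots whose product is $1$; a direct identification with the definition of $\gamma_A$ shows they are exactly $-\gamma_A$ and $-1/\gamma_A$. Since $-1/\gamma_A \leq -1 \leq c$ and $-\gamma_A \in [-1,0)$, the inequality restricted to $c \in [-1,1]$ reduces to $\cos(2\theta) > -\gamma_A$. Writing $-\gamma_A = \cos(\pi - \arccos \gamma_A)$, this is equivalent to $2\theta$ lying modulo $2\pi$ in the open arc $(-(\pi - \arccos \gamma_A),\, \pi - \arccos\gamma_A)$, i.e.\ to $\theta$ lying modulo $2\pi$ in $(-\theta_A,\theta_A) \cup (\pi-\theta_A,\pi+\theta_A)$. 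Translating back via $\psi = \theta + \phi$ produces the two arcs in the statement, while $\theta_A \in (\pi/4,\pi/2]$ follows immediately from $\gamma_A \in (0,1]$.

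I do not expect a serious obstacle: the hard work is already encoded in~\eqref{eq:deriv} and the scalar $k_A$ introduced before the lemma. The most delicate point is identifying which of the two roots of $c^2+k_A c+1$ actually controls the inequality on $[-1,1]$---namely $-\gamma_A$, not $-1/\gamma_A$---and thereby extracting the correct half-width $\theta_A$ and the two symmetric arcs centered at $\phi$ and $\phi + \pi$.
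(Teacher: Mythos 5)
Your proof is correct and reaches the same conclusion by a somewhat more algebraic route than the paper. After the same reduction to $A_2$ via~\eqref{eq:resnormtype2}, the paper first locates the zeros of $g(k_A,\cdot)$ by solving $\cos(2\theta_0)=-\gamma_A$ and then determines the \emph{sign} of $g$ between those zeros by appealing to the earlier proof of Theorem~\ref{thm-henrik}(ii), where it was already established that $\norm{R_{A_2}(\cdot)}$ increases along the real axis and decreases along the imaginary axis near the origin; it also handles $k_A=2$ and $k_A>2$ as separate cases. You instead clear the denominator in $g$ and reduce to the sign of the quadratic $c^2+k_Ac+1$ with $c=\cos(2\theta)$, recognize its roots as $-\gamma_A$ and $-1/\gamma_A$ (one inside, one outside $[-1,1]$), and read off the sign set directly as $\cos(2\theta)>-\gamma_A$. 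This unifies the two cases and makes the argument self-contained, not depending on the earlier monotonicity observations. One small point to tighten: clearing the denominator presumes $k_A+2c>0$ strictly, which fails precisely when $k_A=2$ and $c=-1$ (the imaginary axis); there $g$ is not defined by the given formula. That boundary angle is in any case excluded by your final criterion $\cos(2\theta)>-\gamma_A$ (since then $-\gamma_A=-1$), so the conclusion is unaffected, but it would be worth a sentence noting that the excluded point is the degenerate one. Otherwise the argument, including the identification of the relevant root as $-\gamma_A$ rather than $-1/\gamma_A$ and the translation from $\cos(2\theta)>-\gamma_A$ to the two arcs of half-width $\theta_A=\tfrac{\pi}{2}-\tfrac12\arccos(\gamma_A)$, is sound.
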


\begin{proof}
For simplicity consider $A_2 = {r_A}^{-1} \e^{-\I\phi}(A-\frac{\tr(A)}{2}I)$ such that $A_2$ is as in the proof of Theorem~\ref{thm-henrik}(ii). 
	
For $k_A=2$ let $z=\sqrt{t} \e^{\I\theta}$ as in the proof of Theorem~\ref{thm-henrik}(ii).
We arrive again at \eqref{eq:deriv} under the condition that $t>0$ and $\cos(2\theta)\neq -1$ (avoiding the imaginary axis where we know $\norm{R_{A_2}}$ is decreasing), i.e.
\begin{equation*}
g(2,\theta) = \frac{1+\cos(2\theta)}{2} > 0,\quad \theta\in~]-\pi,\pi]\setminus \{\pm\tfrac{\pi}{2}\}.
\end{equation*}
So for $k_A=2$ the resolvent norm $\norm{R_{A_2}(\cdot)}$ is increasing away from $z=0$ (in a neighborhood) in all directions except along the imaginary axis where it decreases, corresponding to $\theta_A = \frac{\pi}{2}$. 
	
Now assume $k_A>2$. Thus $\gamma_A\in~]0,1[$ and therefore 
$\theta_A\in~]\tfrac{\pi}{4},\frac{\pi}{2}[$. 
By \eqref{eq:deriv} it holds for $\theta_0\in~]-\pi,\pi]$ that $g(k_A,\theta_0)=0$, or equivalently $\cos(2\theta_0)=-\gamma_A$, if and only if
\begin{equation*}
\theta_0 = \begin{cases}
		\phantom{-}\frac{\pi}{2}\pm \frac{1}{2}\arccos(\gamma_A) \\
		-\frac{\pi}{2}\pm \frac{1}{2}\arccos(\gamma_A)
		\end{cases} = \begin{cases}
			\pm \theta_A, \\
			\pm \pi \mp \theta_A.
		\end{cases}
\end{equation*}
From the proof of Theorem~\ref{thm-henrik} we know that $\norm{R_{A_2}(\cdot)}$ is increasing away from $z=0$ on the real line (in a neighborhood) and decreasing away from $z=0$ on the imaginary line. 
Hence, \eqref{eq:deriv} implies that $g(k_A,\theta)>0$ for $\theta\in~]-\theta_A,\theta_A[\;\cup\; ]\pi-\theta_A,\pi+\theta_A[$ and $g(k_A,\theta) \leq 0$ for $\theta\in [\theta_A,\pi-\theta_A]\cup[\pi+\theta_A,2\pi -\theta_A]$. 
	
To recapitulate, for a small enough neighborhood of $0$, the directions for which $\norm{R_{A_2}(\cdot)}$ is increasing away from the origin are precisely the angles $]-\theta_A,\theta_A[\;\cup\; ]\pi-\theta_A,\pi+\theta_A[$. 
Now the rotation by $\e^{\I\phi}$ occurring in \eqref{eq:resnormtype2} concludes the proof.
\end{proof}

From Lemma~\ref{lemma3} we can conclude that if $A\in \IC^{2\times 2}$ has eigenvalues $z_0 \pm r_A \e^{\I\phi}$ with $r_A>0$, then $\norm{R_A(\cdot)}$ is increasing near $z_0$ at least in a $\frac{\pi}{2}$-arc centered at $\phi$ and in a $\frac{\pi}{2}$-arc centered at $\phi+\pi$. 
Thus, we directly get the following result by constructing a block diagonal matrix, where for each direction from a point $z_0$ there is at least one block for which the resolvent norm is increasing. We denote the torus by $\IT = \{z\in \IC\,|\, \abs{z}=1 \}$.
\begin{theorem}\label{thm: henrik 2}
Let $\{A_j\}_{j=1}^M\subset \IC^{2\times 2}$ such that $A_j$ has eigenvalues $z_0\pm r_j \e^{\I\phi_j}$ for $r_j>0$. 
Let $B = \oplus_{j= 1}^M A_j$ such that $B$ has the eigenvalues of each $A_j$ and
\begin{equation*}
\norm{R_{B}(\cdot)} = \max_{j=1,\dots,M}\norm{R_{A_j}(\cdot)}.
\end{equation*}
Then $\norm{R_{B}(\cdot)}$ has a local minimum at $z_0$ if and only if there is a subset of indices $J\subseteq \{1,\dots,M\}$ such that
\begin{enumerate}[\rm(i)]
\item $\norm{R_{A_{j'}}(z_0)} < K = \norm{R_{A_{j}}(z_0)},
\quad j\in J,\quad  j'\not\in J$,
\item $\IT = \{\e^{\I\phi} \,|\, \phi\in \Phi\cup(\pi+\Phi)\}$, where $\Phi = \bigcup_{j\in J} ]\phi_j-\theta_{A_j},\phi_j +\theta_{A_j}[$.
\end{enumerate}
\end{theorem}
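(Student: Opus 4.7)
The plan is to reduce to the $2\times 2$ analysis of Lemma~\ref{lemma3} through the block-diagonal structure of $B$. The starting point, already noted in the statement, is that $R_B(z)$ is block diagonal and so $\norm{R_B(z)}=\max_{j}\norm{R_{A_j}(z)}$ on $\rho(B)$. Setting $K=\norm{R_B(z_0)}$, condition (i) forces $J$ to coincide with the argmax set $\{j:\norm{R_{A_j}(z_0)}=K\}$, so in both directions of the proof $J$ will be identified with this set.

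For the ``if'' direction, I assume (i) and (ii). Continuity together with (i) yields a neighborhood $U$ of $z_0$ on which $\norm{R_{A_{j'}}(z)}<K$ for every $j'\notin J$, hence $\norm{R_B(z)}=\max_{j\in J}\norm{R_{A_j}(z)}$ on $U$. Condition (ii) then says that the open arcs of Lemma~\ref{lemma3} associated to $\{A_j\}_{j\in J}$ form an open cover of the compact torus $\IT$; since $J$ is finite, a Lebesgue-number argument lets me slightly shrink each arc to a closed sub-arc so that the sub-arcs still cover $\IT$. On such a closed sub-arc the function $g(k_{A_j},\cdot)$ from the proof of Theorem~\ref{thm-henrik}(ii) is continuous and strictly positive, hence bounded below by some $g_0>0$. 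Combining the sign pattern~\eqref{eq:deriv} with the rotation identity~\eqref{eq:resnormtype2} then gives $\norm{R_{A_j}(z_0+t\e^{\I\theta})}\geq K$ uniformly for $\theta$ in the closed sub-arc and $0\leq t\leq g_0$. Taking the maximum over $j\in J$ yields $\norm{R_B(z)}\geq K$ on a small disc around $z_0$, so $z_0$ is a local minimum.

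For the ``only if'' direction, I assume local minimality at $z_0$ and set $J$ to be the argmax set, so (i) is automatic. Suppose for contradiction that (ii) fails; then there is a direction $\phi^*$ lying outside both open arcs of Lemma~\ref{lemma3} for every $j\in J$. In that complementary region $g(k_{A_j},\cdot)\leq 0$ while the remaining factors of the radial derivative of $\norm{R_{A_j}}^2$ are positive (for $k_{A_j}>2$ this is~\eqref{eq:deriv}, and the edge case $k_{A_j}=2$ is handled by the explicit formula used in Theorem~\ref{thm-henrik}(ii)(a), which is decreasing along the imaginary axis). Hence $\norm{R_{A_j}(z_0+t\e^{\I\phi^*})}<K$ for every $j\in J$ and all small $t>0$, and continuity yields $\norm{R_{A_{j'}}(z_0+t\e^{\I\phi^*})}<K$ for $j'\notin J$ as well. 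Taking the maximum gives $\norm{R_B(z_0+t\e^{\I\phi^*})}<K$, contradicting the local minimum.

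The main obstacle is the uniformity required in the ``if'' direction: Lemma~\ref{lemma3} is a ray-by-ray statement, whereas local minimality must be verified on a full planar neighborhood of $z_0$. The bridge is the uniform positive lower bound $g_0$ on the finitely many shrunken closed sub-arcs that cover $\IT$; with this in hand, the rest of the argument is routine continuity and the taking of a finite maximum.
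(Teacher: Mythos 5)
Your proof is correct and follows the same overall route as the paper's own (very brief) argument: identify $J$ with the argmax set, use continuity and (i) to reduce the analysis near $z_0$ to $\max_{j\in J}\norm{R_{A_j}(\cdot)}$, and invoke the arc characterization from Lemma~\ref{lemma3} together with the sign pattern~\eqref{eq:deriv} to read off monotonicity along rays. Where you genuinely add something is in the sufficiency direction: the paper's phrase ``since $\{\norm{R_{A_j}(\cdot)}\}_{j\in J}$ coincide at $z_0$ then $\max_{j\in J}\norm{R_{A_j}(\cdot)}$ is increasing near $z_0$'' is a ray-by-ray statement and, as you correctly point out, does not by itself yield a local minimum on a planar neighborhood --- a function increasing along every ray through a point can still fail to have a local minimum there if the radius of monotonicity degenerates near the boundary of each arc. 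Your compactness argument (shrink the finitely many open arcs of Lemma~\ref{lemma3} to closed sub-arcs still covering $\IT$, then bound $g(k_{A_j},\cdot)$ below by a positive constant on each) supplies exactly the uniformity that closes this gap, and is the right way to turn the directional statement into a genuine neighborhood statement. The only blemish is notational: in~\eqref{eq:deriv} the parameter $t$ is $\abs{z}^2$, not the radius $\abs{z-z_0}$, so the uniform radius you obtain is $\sqrt{g_0}$ up to the scaling $r_j$ from~\eqref{eq:resnormtype2}; this does not affect the argument. The necessity direction matches the paper's.
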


\begin{proof}
We note that (i) implies, by continuity, that the value of $\norm{R_{B}(\cdot)}$ near $z_0$ is determined by $\{A_j\}_{j\in J}$. 
(ii) is a necessary condition since otherwise there is a direction for which all $\norm{R_{A_j}(\cdot)}$ are decreasing near $z_0$ for $j\in J$.  
	
On the other hand, if (i) and (ii) hold then for each direction near $z_0$ we have that $\norm{R_{B}(\cdot)}$ equals $\max_{j\in J}\norm{R_{A_j}(\cdot)}$ for which at least one of the resolvent norms $\norm{R_{A_j}(\cdot)}$ is increasing. Since $\{\norm{R_{A_j}(\cdot)}\}_{j\in J}$ coincide at $z_0$ then $\max_{j\in J} \norm{R_{A_j}(\cdot)}$ is increasing near $z_0$.
\end{proof}
As a sufficient condition in Theorem~\ref{thm: henrik 2} we may simply have $\norm{R_{A_j}(z_0)} = K$,  $j=1,\dots,M$, for a constant $K>0$ and
\begin{equation*}
\IT = \left\{\e^{\I\phi} \,|\, \phi\in \bigcup{}_{j=1}^M \left( [\phi_j-\tfrac{\pi}{4},\phi_j +\tfrac{\pi}{4}]\cup[\phi_j+\tfrac{3\pi}{4},\phi_j+\tfrac{5\pi}{4}] \right)\right\}.
\end{equation*}

\begin{example}
Let $A_1,A_2\in\IC^{2\times 2}$ have eigenvalues, respectively $z_0\pm r_{A_1} \e^{\I\phi}$ and $z_0\pm r_{A_2} \I\e^{\I\phi}$ for $r_{A_1},r_{A_2}>0$. 
I.e. the eigenvalues are placed on orthogonal lines intersecting at $z_0$. 
For $B=A_1\oplus A_2$ then $\norm{R_{B}(\cdot)}$ has a local minimum at $z_0$ if and only if $\norm{R_{A_1}(z_0)} = \norm{R_{A_2}(z_0)}$.
This is the case if $\gamma_{A_1}/\gamma_{A_2} = \left({r_{A_2}}/{r_{A_1}}\right)^2$.
\end{example}

\begin{example} \label{ex:last}
Consider the following matrices
\begin{equation*}
A_1 = \begin{bmatrix}
		\I & 0 \\ 0 & -\I
      \end{bmatrix}, \quad
A_2 = \sqrt{\tfrac{2}{6-\sqrt{32}}}\,\e^{-\frac{\pi}{6}\I}\begin{bmatrix}
			1 & 2 \\ 0 & -1
		\end{bmatrix}, \quad A_3 = \sqrt{\tfrac{2}{30-\sqrt{896}}}\,\e^{\frac{\pi}{6}\I} \begin{bmatrix}
			2\I & -1 \\ -5 & -2\I
		\end{bmatrix}.
\end{equation*}
These are all scaled and rotated versions of matrices of the type \eqref{eq:type2} such that they satisfy the conditions of Theorem~\ref{thm: henrik 2} with $z_0 = 0$, $\phi_1 = \frac{\pi}{2}$, $\phi_2= -\frac{\pi}{6}$ and $\phi_3 = \frac{\pi}{6}$. 
Moreover, we have $\norm{R_{A_1}(0)} = \norm{R_{A_2}(0)} = \norm{R_{A_3}(0)} = 1$. 
Thus with $B = A_1\oplus A_2\oplus A_3$ we get that $\norm{R_{B}(\cdot)}$  has a local minimum at the origin. Note that $B$ is not normal.

\begin{figure}[htb]
		\centering
		\includegraphics[width=.6\textwidth]{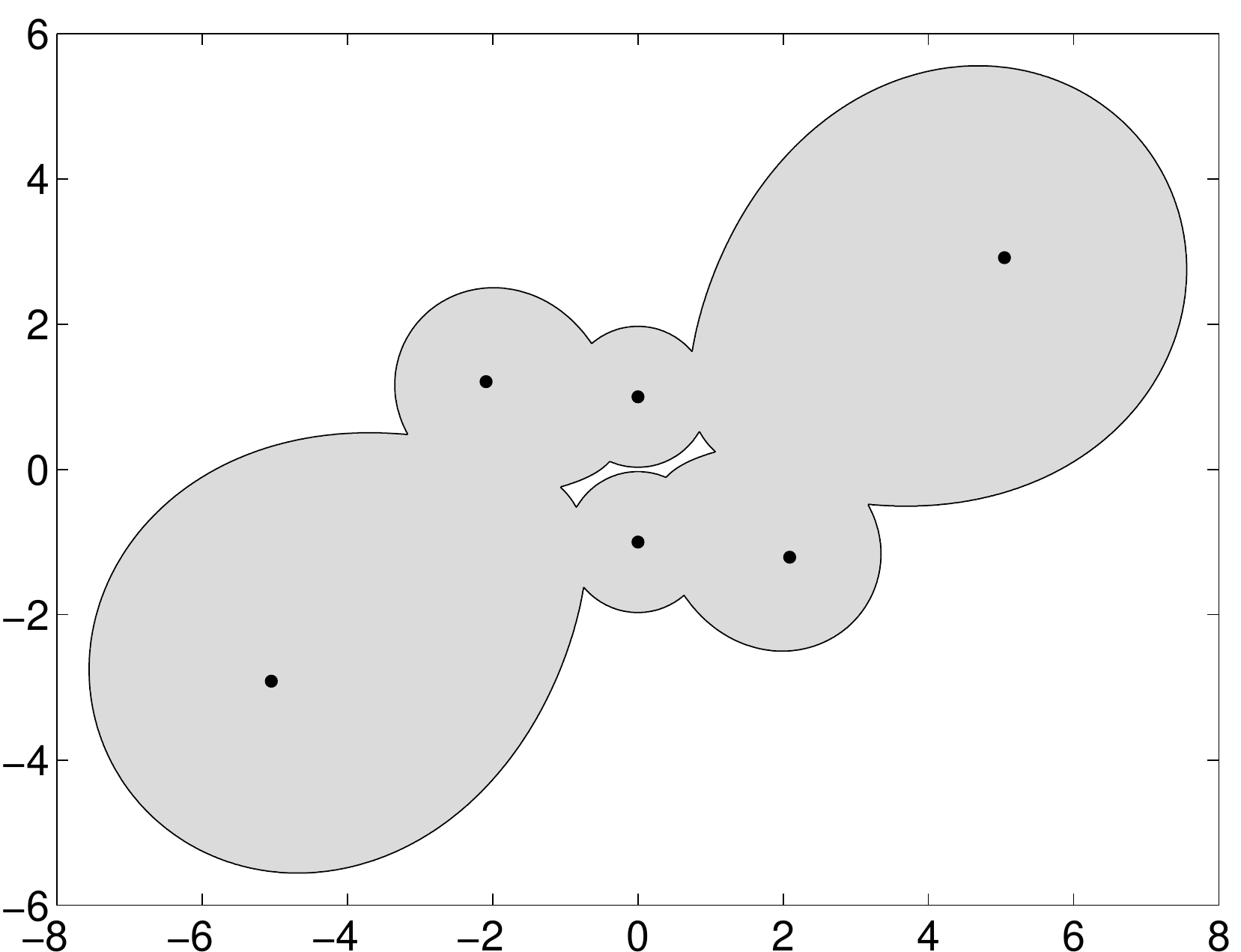}
		\caption{Pseudospectrum $\sigma_\varepsilon(B)$ with $\varepsilon = 0.97$ and eigenvalues of $B$ (black dots) from Example~\ref{ex:last}.} 
                \label{fig:pseudo}
\end{figure}

This is furthermore observed in Figure~\ref{fig:pseudo} where for appropriate values of $\varepsilon$, $\sigma_\varepsilon(B)$ is a connected set, but not simply connected.
In particular it excludes a region near the origin.
\end{example}

%%%%%% NEW SUBSECTION %%
\subsection{Examples for any finite $\mathbf{\textit N > 2}$}%
\label{sec: general example}

Let $N>2$ and $\{a_j\}_{j=1}^N \subset \IC\setminus\{0\}$ such that $\abs{a_1} > \abs{a_j}$ for $j=2,\dots,N$. Define
\begin{equation}
	A = \begin{bmatrix}
		0 & \ldots & \ldots & 0 & \frac{1}{a_1} \\
		\frac{1}{a_2} & \ddots &  & & 0 \\
		 & \frac{1}{a_3} & \ddots & & \vdots \\
		 & & \ddots & \ddots & \vdots\\
		 & & & \frac{1}{a_N} & 0
	\end{bmatrix}. \label{eq:genexample}
\end{equation}
Then we have:
\begin{equation*}
	A^{-1} = \begin{bmatrix}
			0 & a_2 &  &  & \\
			\vdots & \ddots & a_3 & &  \\
			\vdots &  & \ddots & \ddots &  \\
			0 & &  & \ddots & a_N\\
			a_1 & 0 & \ldots & \ldots & 0
		\end{bmatrix}, \quad S(0) = (A^{-1})^{\ast}A^{-1} = \begin{bmatrix}
					 \abs{a_1}^2 & & & & \\
					 & \abs{a_2}^2 & & &  \\
					 & & \abs{a_3}^2 & & \\
					 & & & \ddots      &\\
					 & & & & \abs{a_N}^2
		\end{bmatrix}.
\end{equation*}
Thus the normalized eigenvectors for $S(0)$ corresponding to the largest eigenvalue $\abs{a_1}^2$ are 
\begin{equation*}
	\psi = (\e^{\I\theta},0,\dots,0),\quad \theta\in\IR.
\end{equation*}
Furthermore, as $N>2$ we have
\begin{equation*}
	A^{-2} = \begin{bmatrix}
			0 & 0 & a_2a_3 &  &  & \\
			\vdots & \ddots & \ddots & a_3a_4 &  \\
			\vdots &  & \ddots & \ddots & \ddots&  \\
			0 &  &  & \ddots & \ddots & a_{N-1}a_N\\
			a_1a_N & 0 &  &  & \ddots& 0 \\
			0 & a_1a_2 & 0 & \ldots & \ldots & 0 
		\end{bmatrix}.
\end{equation*}
Thus we have
\begin{equation*}
	\ip{\psi}{A^{-1}\psi} = \ip{\psi}{A^{-2}\psi} = 0,
\end{equation*}
which by Corollary~\ref{cor12} implies that $\norm{R_A(\cdot)}$ has a local minimum at the origin.

Note that this example fails for $N=2$, since in that case $A^{-2} = \diag(a_1a_2,a_1a_2)$, so $\ip{\psi}{A^{-2}\psi} \neq 0$.

See Figure~\ref{fig:pseudo2} for a specific choice of $\{a_j\}_{j=1}^6$ for $N=6$.

\begin{figure}[htb]
	\centering
	\includegraphics[width=.5\textwidth]{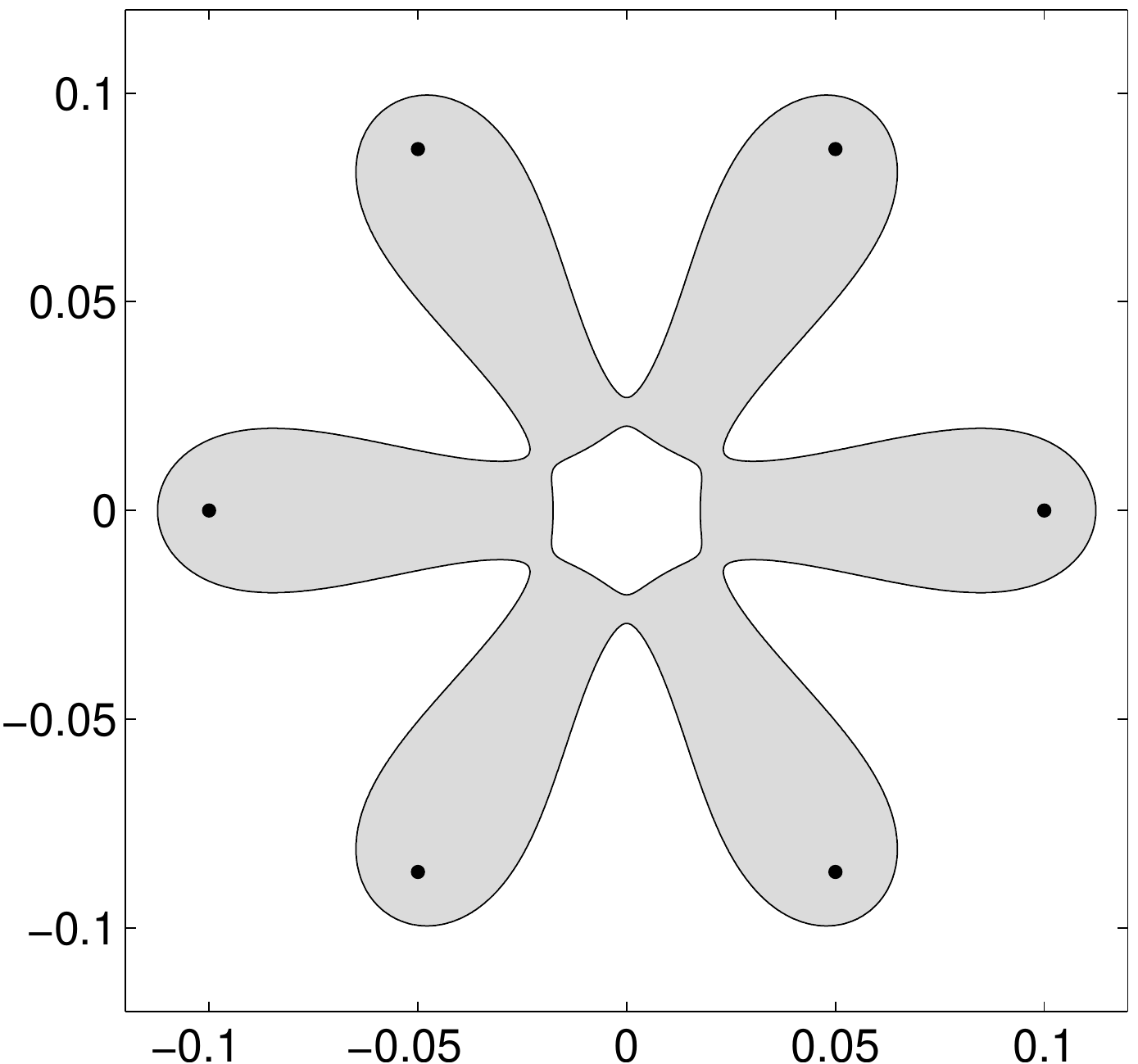}
	\caption{Pseudospectrum $\sigma_\varepsilon(A)$ with $\varepsilon = 9.9966\cdot 10^{-7}$ and eigenvalues of $A$ (black dots) of the matrix in \eqref{eq:genexample} where $N=6$, $a_1 = 10^6$ and $a_j = 1$ for $j = 2,\dots,6$.} 
	\label{fig:pseudo2}
\end{figure}

%%%%% NEW SUBSECTION %%
\subsection{An infinite dimensional example}
\label{infinite}
We give an example of a non-normal operator on an infinite dimensional Hilbert space sa\-tis\-fy\-ing Assumption~\ref{assum} such that its resolvent norm has a local minimum at the origin.

Let $\cH=\ell^2(\IZ)$. Let $a_j\in\IC\setminus\{0\}$,  $j\in\IZ$, be a sequence which satisfies
\begin{equation}\label{a-assum}
\abs{a_0} > \sup_{j \neq 0} \abs{a_j}. %\sup\{\abs{a_j}\,|\,j\neq0\}.
\end{equation}
Define an operator $A$ by
\begin{equation*}
(Ax)_j=a_{j+1}^{-1}x_{j+1},\quad x\in D(A)
\end{equation*}
where $D(A)=\{x\in\cH\,|\, Ax\in \cH\}$ is the maximal domain. It is easy to verify that $A$ is densely defined and closed. We have that $A$ is invertible with a bounded inverse
\begin{equation*}
(A^{-1}x)_j=a_jx_{j-1},\quad x\in\cH.
\end{equation*}

A computation shows that $(A^{-1})^{\ast}A^{-1}$ is given by $((A^{-1})^{\ast}A^{-1}x)_j=\abs{a_{j+1}}^2x_j$, $j\in\IZ$. Thus a basis of eigenvectors is given by the canonical basis $\{e_j\}_{j\in\IZ}$. By construction the largest eigenvalue is simple and equals $\abs{a_0}^2$, with $\psi=e_{-1}$ as a normalized eigenvector. We have
\begin{align*}
\ip{\psi}{A^{-1}\psi}&=a_0\ip{e_{-1}}{e_0}=0,\\
\ip{\psi}{A^{-2}\psi}&=a_0a_1\ip{e_{-1}}{e_{1}}=0.
\end{align*}
Thus by Corollary~\ref{cor12} $\norm{R_A(\cdot)}$ has a local minimum at the origin.

We note that if $\lim_{\abs{j}\to\infty}a_j=0$, then $A$ has compact resolvent. If $\inf_{j\in\IZ}\abs{a_j}>0$ then $A$ is bounded.
 
\subsection*{Acknowledgements}
HC, AJ, and HKK were partially  supported by the Danish Council for Independent Research $|$ Natural Sciences, Grant DFF--4181-00042. AJ thanks Kenji Yajima, Gakushuin University, Tokyo, Japan, for useful comments.

\end{document}